\documentclass{tac}

\usepackage{amsmath}

\usepackage{amssymb}

\title{A Tale of Two Paths}

\address{Unaffiliated\\
 Chicago, Illinois, USA}

\author{Johnathon Taylor}

\usepackage{tikz}
\usepackage{tikz-cd}

\newtheoremrm{construction}{Construction}
\newtheoremrm{conjecture}{Conjecture}

\begin{document}

\maketitle

\begin{abstract}
We provide a new presentation for a cylinder object inside of the category of co-globular $\infty$-groupoids. We prove that the cylinder construction that Lanari provides is isomorphic to the one we build in this paper. 
\end{abstract}

\copyrightyear{2026}

\keywords{cylinder object, infinity groupoids, globular sets}
\amsclass{18D05, 18N65, 55P99}

\eaddress{jt3theend17@gmail.com}

\tableofcontents
\section*{Introduction}
Grothendieck hypothesized in \emph{Pursuing Stacks} \cite{Gr1} that there was a model structure on a locally presentable category whose objects are called Grothendieck $\infty$-groupoids. Grothendieck posited that Grothendieck $\infty$-groupoids canonically have an underlying globular set and are presented by objects that take the appearance of disks. Moreover, Grothendieck hypothesized that there was an equivalence of homotopy categories between Grothendieck $n$-groupoids and homotopy $n$-types for all $n\geq 0$, including $n=\infty$, in a statement called \emph{Grothendieck's Homotopy Hypothesis}.

Maltsiniotis provided a rigorous definition of Grothendieck $\infty$-groupoids in \cite{Geo2}. Moreover, the combined work of Ara, Bourke, Henry, Lanari, and Maltsiniotis have proven many notable results. First and foremost, Henry proved that there is a semi-model structure on the category of Grothendieck $\infty$-groupoids provided that a certain lemma called the \emph{pushout lemma} is true \cite{Henry2016}. Moreover, Henry proved that if the semi-model structure exists, then Grothendieck's Homotopy Hypothesis is true. Lanari constructed the underlying globular set of the \emph{path object} of an $\infty$-groupoid and proved that its existence implied that Grothendieck's semi-model structure exists \cite{Lanari2018,Lanari2020}. Joint work of Henry and Lanari showed that the Homotopy Hypothesis is true for dimension 3 \cite{HenLan}.

We focus in on the notion of cylinder object introduced by Lanari. One of the fundamental tools that Lanari implements is the suspension functor that he introduces in Section 4 of \cite{Lanari2020}. It is technically defined and we seek a more friendly construction (friendly is definitely used relatively) of the cylinder object. Therefore, we re-imagine the construction of a cylinder object. Lanari's cylinder construction works by inductively propagating the shapes of a cylinder up a dimension and then shaping the result via a colimit given by the shape of a zigzag diagram. If we let $\alpha$ be a $1$-disk, the visualization of this result is the following.

\[
\begin{tikzpicture}[scale=1.3]
\begin{scope}
\draw[-,very thick] (-5,0) -- (-3,0);
\node at (-4,0.5) {$\alpha$};
 \node at (-4,-1.7) {1-cell};
\end{scope}
\draw[->,very thick] (-2.5,0) -- (-0.5,0)
    node[midway,above] {propogate};

\begin{scope}
    \draw[thick] (1,0) circle (1);

    \draw[very thick] (0,0) arc (180:0:1);
    \draw[very thick] (0,0) arc (180:360:1);

    \node at (1,0) {$\alpha$};

    \node at (1,-1.7) {2-cell};
\end{scope}

\draw[->,very thick] (2.3,0) -- (4,0)
    node[midway,above] {shape};

\begin{scope}[xshift=5cm]
    \draw[thick] (-0.75,-0.75) rectangle (0.75,0.75);

    \node at (0,0) {$\alpha$};

    \node at (0,-1.7) {2-cell};
\end{scope}

\end{tikzpicture}
\]
Our construction that we build here is built with a fundamentally different process.

Our construction is done in two layers: the category of Grothendieck infinity groupoids and the category of co-globular $\infty$-groupoid objects. We begin by setting $C^{(0)}=D_\bullet \coprod D_\bullet$ inside of the category of co-globular $\infty$-groupoids. Then we inductively construct a chain inside of the category of co-globular $\infty$-groupoids whose colimit we call the \emph{cylinder object} and denote by $\mathbf{Cyl}_\bullet$.
\[
C^{(0)}
\to
C^{(1)}
\to
C^{(2)}
\to
C^{(3)}
\to\cdots
\]

For all $n\geq 0$, we obtain $C^{(n+1)}$ from $C^{(n)}$ in steps. We first apply the evaluation functor at $n$ to land in the infinity groupoid $C^{(n)}_n$. While here, we identify an ideal $n$-sphere $\beta_n:S^n\to C^{(n)}_n$. We now transport the following span of $\infty$-groupoids to the category of co-globular $\infty$-groupoids using the left adjoint to the evaluation at $n$ functor, call it $F_n$, and apply the counit to the resulting $F_n(C^{(n)}_n)$, so that our new span has endpoint $C^{(n)}$ rather than $F_n(C^{(n)}_n)$.
\[
D^{n+1}\xleftarrow{j_n}S^n\xrightarrow{\beta_n}C^{(n)}_n
\]
We then define $C^{(n+1)}$ to be the pushout of this induced span. 

We now informally describe how our construction works. Imagine if you would, that you were given a collection of puzzles indexed by the natural numbers that piece together to ultimately build all of the higher dimensional cylinders on the first floor of a two story building. You are tasked with building the puzzles and having the built puzzles on the second floor, with the first floor being played by the category of infinity groupoids and the second floor being played by the category of co-globular Grothendieck $\infty$-groupoids. Suppose we also know that there is an ordering on the puzzles determined by the number of layers need to be put together to complete that puzzle. For our construction, we tackle the puzzle problem by first laying down the border of each puzzle and carrying the border to the second floor, with the border in this case being played by $D_\bullet\coprod D_\bullet$.

\[
\begin{tikzpicture}[
    scale=1.2,
    line cap=round,
    line join=round
]


\begin{scope}[xshift=0cm]

    \node at (0,2.5) {$D^0 \amalg D^0$};

    \fill (-0.6,1.2) circle (2.5pt);
    \fill ( 0.6,1.2) circle (2.5pt);

\end{scope}


\begin{scope}[xshift=4cm]

    \node at (0.6,2.5) {$D^1 \amalg D^1$};

    \draw (0,-0.2) -- (0,1.4);
    \fill (0,-0.2) circle (2.5pt);
    \fill (0,1.4) circle (2.5pt);

    \draw (1.2,-0.2) -- (1.2,1.4);
    \fill (1.2,-0.2) circle (2.5pt);
    \fill (1.2,1.4) circle (2.5pt);

\end{scope}


\begin{scope}[xshift=9cm]

    \node at (0.9,2.5) {$D^2 \amalg D^2$};

    \draw (0,0.6)
        ellipse [x radius=0.65, y radius=0.9];

    \draw (1.8,0.6)
        ellipse [x radius=0.65, y radius=0.9];

    \node at (0,0.55) {$\Rightarrow$};

    \node at (1.81,0.55) {$\Rightarrow$};

\end{scope}

\end{tikzpicture}
\]

Then once the border is on the second floor, we now identify the next layer of the puzzle to lay down, we build that on the first floor, and then we carry that bit to the second floor and piece it together with the border. 

\[
\begin{tikzpicture}[
    scale=1.2,
    line cap=round,
    line join=round,
    >=stealth
]


\begin{scope}[xshift=0cm]

    \node at (0,2.5) {$C^{(1)}_0$};

    \fill (-0.6,1.2) circle (2.5pt);
    \fill ( 0.6,1.2) circle (2.5pt);

    \draw[->] (-0.55,1.2) -- (0.55,1.2);

\end{scope}


\begin{scope}[xshift=4cm]

    \node at (0.6,2.5) {$C^{(1)}_1$};

    \draw (0,-0.2) -- (0,1.4);
    \fill (0,-0.2) circle (2.5pt);
    \fill (0,1.4) circle (2.5pt);

    \draw (1.2,-0.2) -- (1.2,1.4);
    \fill (1.2,-0.2) circle (2.5pt);
    \fill (1.2,1.4) circle (2.5pt);

    \draw[->] (0,1.4) -- (1.2,1.4);
    \draw[->] (0,-0.2) -- (1.2,-0.2);

\end{scope}


\begin{scope}[xshift=9cm]

    \node at (0.9,2.5) {$C^{(1)}_2$};

    \draw (0,0.6)
        ellipse [x radius=0.65, y radius=0.9];

    \draw (1.8,0.6)
        ellipse [x radius=0.65, y radius=0.9];

    \draw[->] (0,1.5) -- (1.8,1.5);
    \draw[->] (0,-0.3) -- (1.8,-0.3);

    \node at (0,0.65) {$\Rightarrow$};

    \node at (1.81,0.65) {$\Rightarrow$};

\end{scope}

\end{tikzpicture}
\]

We now build the next layer in the exact same fashion.
\[
\begin{tikzpicture}[
    scale=1.2,
    line cap=round,
    line join=round,
    >=stealth
]


\begin{scope}[xshift=0cm]

    \node at (0,2.5) {$C^{(2)}_0$};

    \fill (-0.6,1.2) circle (2.5pt);
    \fill (0.6,1.2) circle (2.5pt);

    \draw[->] (-0.55,1.2) -- (0.55,1.2);

\end{scope}


\begin{scope}[xshift=4cm]

    \node at (0.6,2.5) {$C^{(2)}_1$};

    \draw (0,0) rectangle (1.2,1.6);

    \draw[->] (0,0) -- (1.2,0);
    \draw[->] (0,1.6) -- (1.2,1.6);

    \node at (0.6,0.8) {$\Rightarrow$};

    \fill (0,0) circle (2.5pt);
    \fill (0,1.6) circle (2.5pt);
    \fill (1.2,0) circle (2.5pt);
    \fill (1.2,1.6) circle (2.5pt);

\end{scope}


\begin{scope}[xshift=9cm]

    \node at (0.9,2.5) {$C^{(2)}_2$};

    \draw (0,0.3) -- (1.8,0.3);
    \draw (0,2.1) -- (1.8,2.1);

    \draw (0,1.2)
        ellipse [x radius=0.45, y radius=0.9];

    \draw (1.8,1.2)
        ellipse [x radius=0.45, y radius=0.9];

    \draw[dashed]
        (0,0.3) arc (270:90:0.45 and 0.9);

    \draw[dashed]
        (1.8,0.3) arc (270:90:0.45 and 0.9);

    \node at (1,1.5) {$\Rightarrow$};

    \node at (0.8,0.8) {$\Rightarrow$};

    \node at (1,1.5) {$\Rightarrow$};

    \node at (0,1.125) {$\Rightarrow$};

    \node at (1.81,1.125) {$\Rightarrow$};

\end{scope}
\end{tikzpicture}
\]
Our construction is inductive, it builds a co-globular $\infty$-groupoid at every step, and it is built completely using universal properties at every stage in the construction. Additionally, we prove that the construction we provide is isomorphic to the one Lanari provides using formulas that we generate and the existence of a system of pushouts that the the construction of Lanari and the one presented here both present.

In Section 1, we give background on pure colimit sketches and algebraic weak factorization systems. In Section 2, we provide the necessary background on $(\infty,0)$-coherators, the \emph{algebraic coherator}, and Grothedieck $\infty$-groupoids. In the final section, we begin by exploring the left adjoints to the evaluation functors at a specific co-dimension and the \emph{projective generating cofibrations}. We then build a new cylinder object, denoted by $\mathbf{Cyl}_\bullet$, in the category of co-globular infinity groupoids. We conclude by proving the following theorem.
\begin{center}
\textbf{Theorem \ref{iso_to_Lan}}
\emph{There is an isomorphism of of co-globular $\infty$-groupoids $\phi:\mathbf{Cyl}_\bullet\to \mathbf{E}$,
where $\mathbf{E}$ is the cylinder construction given by Lanari.}
\end{center}

\subsection*{Special Thanks from the Author}
I would like to thank my husband for constant support. I would like to thank Simon Henry for all of our back and forth conversations on this topic. 

\subsection*{Remarks about Work from the Author}
The arguments I provide in later sections regarding the algebraic coherator $AC$ work equally well for any $(\infty,0)$-coherator. When relevant, I mention why it works and how. I use the algebraic coherator, exclusively, because it is my favorite coherator and it makes the constructions in this paper algebraic and completely independent of any self-made choices. Moreover, it has been the coherator that I have had the most success working with in Python script. 

\subsection*{Common Notation}
We introduce the following notation, which will be used throughout the remainder of this paper.

\begin{itemize}
    \item $\mathbf{Set}$ denotes the category of sets.
    \item $\mathbf{Top}$ denotes the category of compactly generated weakly Hausdorff spaces.
    \item Given a category $C$, we write $C^2$ for the arrow category.
    \item Given categories $C$ and $D$, we write $[C,D]$ for the category of functors from $C$ to $D$.
\end{itemize}

\section{Background}
We now begin by providing background on two topics that are important enough to the paper to include here; however, the theory they involve is more expansive and we only use bits of that complete theory for this paper.
\subsection*{Pure colimit sketches}
In this short subsection, we provide the definition of pure colimit sketch and state one of its fundamental consequences for the purposes of this paper.
\begin{definition}
A \emph{pure colimit sketch} consists of a small category $\mathcal{S}$ together with a specified collection of colimit co-cones in $\mathcal{S}$. A \emph{model} of $\mathcal{S}$ in a category $\mathcal{E}$ is a functor
\[
X : \mathcal{S}\to\mathcal{E}
\]
that sends each specified colimit co-cone in $\mathcal{S}$ to a colimit co-cone in $\mathcal{E}$. We denote the category of models by
\[
\mathbf{Mod}(\mathcal{S},\mathcal{E}).
\]
\end{definition}

\begin{proposition}\label{pure_colimit_models_colimits}
Let $\mathcal{S}$ be a pure colimit sketch, and let $\mathcal{E}$ be a cocomplete category. Then the inclusion
\[
\mathbf{Mod}(\mathcal{S},\mathcal{E})
\hookrightarrow
[\mathcal{S},\mathcal{E}]
\]
creates colimits. In particular, if $D: J\to \mathbf{Mod}(\mathcal{S},\mathcal{E})$
is a diagram of models, then its colimit is computed point-wise. That is,
\[
\left(\operatorname{colim}_{j\in J}D_j\right)(A)
\cong
\operatorname{colim}_{j\in J}D_j(A)
\]
for every object $A\in\mathcal{S}$.
\end{proposition}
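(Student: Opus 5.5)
The approach is the standard one: colimits in a functor category $[\mathcal{S},\mathcal{E}]$ with $\mathcal{E}$ cocomplete are computed pointwise. So it suffices to show two things. First, the pointwise colimit of a diagram of models is again a model. Second, the full subcategory $\mathbf{Mod}(\mathcal{S},\mathcal{E})$ then inherits that colimit, with the lifting and uniqueness required by creation. Since $\mathbf{Mod}(\mathcal{S},\mathcal{E})$ is a full subcategory, any cocone in $[\mathcal{S},\mathcal{E}]$ whose vertex lies in $\mathbf{Mod}$ is automatically a cocone there. Universality in $\mathbf{Mod}$ then follows from universality in $[\mathcal{S},\mathcal{E}]$, and uniqueness of the lift is up to the usual (unique) isomorphism.

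The first step is to fix $D:J\to\mathbf{Mod}(\mathcal{S},\mathcal{E})$ and define $L:\mathcal{S}\to\mathcal{E}$ by $L(A)=\operatorname{colim}_{j}D_j(A)$. Its action on a morphism $f:A\to B$ is the map induced on colimits by the natural family $D_j(f)$. Functoriality of $L$ and the fact that the coprojections $D_j\Rightarrow L$ are natural both come from the uniqueness clause of the universal property. The universal property of $L$ in $[\mathcal{S},\mathcal{E}]$ is checked the same way. Given a cocone $D_j\Rightarrow M$, one takes the induced maps $L(A)\to M(A)$ componentwise, and naturality again follows from uniqueness.

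The main step is to show that $L$ is a model. Take a specified colimit cocone in $\mathcal{S}$, say with diagram $K:I\to\mathcal{S}$ and vertex $A$. Because each $D_j$ is a model, we have $D_j(A)\cong\operatorname{colim}_{i}D_j(K_i)$ via the canonical map. Therefore
\[
L(A)=\operatorname{colim}_j D_j(A)\cong \operatorname{colim}_j\operatorname{colim}_i D_j(K_i)\cong \operatorname{colim}_i\operatorname{colim}_j D_j(K_i)=\operatorname{colim}_i L(K_i),
\]
using that colimits commute with colimits, i.e. the Fubini interchange for a $J\times I$-indexed diagram in the cocomplete category $\mathcal{E}$.

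The point needing care is to verify that this composite isomorphism is precisely the canonical comparison map $\operatorname{colim}_i L(K_i)\to L(A)$ induced by the cocone $L(K_i)\to L(A)$, and not merely some abstract isomorphism. I would check this by precomposing with the coprojections $D_j(K_i)\to\operatorname{colim}_iL(K_i)$ and comparing both maps on these generators. Both composites equal $D_j(K_i)\to D_j(A)\to L(A)$, so by the universal property the two maps agree. Hence $L$ sends each specified cocone to a colimit cocone, $L\in\mathbf{Mod}(\mathcal{S},\mathcal{E})$, and the displayed pointwise formula is exactly the definition of $L$.
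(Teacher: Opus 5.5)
Your proposal is correct and follows essentially the same route as the paper: you form the pointwise colimit, show via the interchange of colimits that it is again a model, and use fullness of $\mathbf{Mod}(\mathcal{S},\mathcal{E})$ to inherit the universal property. Your extra check that the composite isomorphism equals the canonical comparison map $\operatorname{colim}_i L(K_i)\to L(A)$ is left implicit in the paper, and it is exactly what is needed for the image cocone to be a colimit cocone rather than merely having an abstractly isomorphic vertex.
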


\begin{proof}
Let $D: J\to \operatorname{Mod}(\mathcal{S},\mathcal{E})$
be a diagram. Since $\mathcal{E}$ is cocomplete, the point-wise colimit
\[
M=\operatorname{colim}_{j\in J}D_j
\]
exists in $[\mathcal{S},\mathcal{E}]$. Therefore, we have that
\[
M(A)=\operatorname{colim}_{j\in J}D_j(A)
\]
for every $A\in\mathcal{S}$.
We must now show that $M$ is a model of $\mathcal{S}$. Let $C:I\to\mathcal{S}$
be a distinguished colimit diagram with specified colimit co-cone having vertex $A$. Since each $D_j$ is a model of $\mathcal{S}$, the image of this co-cone under $D_j$ is a colimit co-cone in $\mathcal{E}$. Hence
\[
D_j(A)\cong\operatorname{colim}_{i\in I}D_j(C(i)).
\]
Therefore, we must have that
\begin{align*}
M(A)
&\cong \operatorname{colim}_{j\in J}D_j(A)\\
&\cong \operatorname{colim}_{j\in J}
    \operatorname{colim}_{i\in I}D_j(C(i))\\
&\cong \operatorname{colim}_{i\in I}
    \operatorname{colim}_{j\in J}D_j(C(i))\\
&\cong \operatorname{colim}_{i\in I}M(C(i)).
\end{align*}
The third isomorphism follows from the fact that colimits commute with colimits. Therefore $M$ sends every distinguished colimit co-cone of $\mathcal{S}$ to a colimit co-cone in $\mathcal{E}$. We conclude that $M$ is a model of $\mathcal{S}$.

It follows that the point-wise colimit of the diagram $D$ lies in $\mathbf{Mod}(\mathbf{S},\mathcal{E})$. Since $\operatorname{Mod}(\mathcal{S},\mathcal{E})$ is a full subcategory of $[\mathcal{S},\mathcal{E}]$, this point-wise colimit satisfies the universal property of the colimit in $\mathbf{Mod}(\mathcal{S},\mathcal{E})$. Therefore, the inclusion
\[
\mathbf{Mod}(\mathcal{S},\mathcal{E})
\hookrightarrow
[\mathcal{S},\mathcal{E}]
\]
creates colimits.
\end{proof}

\subsection*{Algebraic weak factorization systems}
We now recall the theory of algebraic weak factorization systems of Garner \cite{Garner2009}.
\begin{definition}
An \emph{algebraic weak factorization system} on a category $C$
is a pair $(L,R)$ consisting of a comonad $L$ and a monad $R$ on the arrow
category $C^{\mathbf{2}}$ such that every morphism
$f: X\to Y$ admits a functorial factorization
\[
X\xrightarrow{Lf}Ef\xrightarrow{Rf}Y
\]
of $f$, where the action of $L$ and $R$ on objects is given by the two
maps in this factorization, and such that the comonad and monad structures
are compatible with the factorization.
\end{definition}

\begin{definition}
An algebraic weak factorization system $(L,R)$ on a category $C$
is said to be \emph{cofibrantly generated} if there exists a set of
morphisms $\mathcal{I}$ of $C$ such that $(L,R)$ is the algebraic weak factorization system generated by
$\mathcal{I}$ via the algebraic small object argument. In this case,
$\mathcal{I}$ is called a set of \emph{generating cofibrations}.
\end{definition}

Let $(L,R)$ be an algebraic weak factorization system on $C$ cofibrantly generated by a set of maps $I$. The
\emph{fibrant replacement} functor
\[
R_I: C\to C
\]
is defined by factoring the terminal map $X\to *$ as
\[
X\xrightarrow{\eta_X}R_I(X)\to *,
\]
where
\[
R_I(X):=E(X\to *)
\qquad\text{and}\qquad
\eta_X:=L(X\to *).
\]
Thus $\eta_X: X\to R_I(X)$ is the induced fibrant replacement map.

\section{Background on Grothendieck infinity groupoids}
We now provide background on the theory of Grothendieck $\infty$-groupoids following the work of
Maltsiniotis~\cite{Malt}, Bourke~\cite{Bo2}, and Ara~\cite{Dim1}.

\subsection*{Basics of globular sets and coherators} 

To begin with, we define globular and co-globular objects in a category $C$. Globular objects provide a combinatorial framework for describing higher-dimensional cells together with their source and target operations. Their
indexing category is the \emph{globe category}.

\begin{notation}
The \emph{globe category} $\mathbf{G}$ is the category
generated by the graph 
\[
\begin{tikzpicture}[node distance=2cm]
    \node (dots) {$\cdots$};
    \node (3) [right of=dots] {$3$};
    \node (2) [right of=3] {$2$};
    \node (1) [right of=2] {$1$};
    \node (0) [right of=1] {$0$};

    \draw[transform canvas={yshift=0.4ex},->]
        (3) to (dots);

            \draw[transform canvas={yshift=-0.4ex},->]
        (3) to  (dots);

    \draw[transform canvas={yshift=0.4ex},->]
        (2) to node[above] {$s_2$} (3);

    \draw[transform canvas={yshift=-0.4ex},->]
        (2) to node[below] {$t_2$} (3);

    \draw[transform canvas={yshift=0.4ex},->]
        (1) to node[above] {$s_1$} (2);

    \draw[transform canvas={yshift=-0.4ex},->]
        (1) to node[below] {$t_1$} (2);

    \draw[transform canvas={yshift=0.4ex},->]
        (0) to node[above] {$s_0$} (1);

    \draw[transform canvas={yshift=-0.4ex},->]
        (0) to node[below] {$t_0$} (1);
\end{tikzpicture}
\]
subject to the relations
\[
s_{n+1}\circ s_n=t_{n+1}\circ s_n,
\qquad
s_{n+1}t_n=t_{n+1}t_n.
\]
\end{notation}

\begin{definition}
A \emph{globular object} in a category $C$ is a functor
\[
X:\mathbf{G}^{\mathbf{op}}\to C.
\]
A \emph{co-globular object} in a category $C$ is a functor
\[
X:\mathbf{G}\to C.
\]
\end{definition}

\begin{example}\label{example_important}
For each $n\geq0$, let
\[
D^n=\{x\in\mathbf{R}^n:\|x\|\leq1\}.
\]
The assignments
\[
\sigma^n(x)=\left(x,\sqrt{1-\|x\|^2}\right),
\qquad
\tau^n(x)=\left(x,-\sqrt{1-\|x\|^2}\right)
\]
define a co-globular object
\[
D:\mathbf{G}\to\mathbf{Top},
\]
where $\sigma^n$ and $\tau^n$ include the upper and lower hemispheres of
$D^{n-1}$ into $D^n$. Consequently every space $X$ determines a globular
object
\[
\mathbf{Top}(D^{(-)},X).
\]
\end{example}

\subsection{Globular Products and Operations}
In addition to an appropriate combinatorial framework, we require certain limits that mimic the Segal condition for complete Segal spaces (see \cite{Rezk2001}) to exist. Additionally, we require operations that allow us to combine the data together to exist. We first define the necessary limits.

\begin{definition}
A \emph{table of dimensions} is an odd-length sequence
\[
\vec n=(n_1,\ldots,n_{2k+1})
\]
of natural numbers such that
\[
n_1>n_2<n_3\cdots>n_{2k}<n_{2k+1}.
\]
\end{definition}

A table of dimensions 
\[
\vec n=(n_1,\ldots,n_{2k+1})
\]
induces a zig-zag diagram in $\mathbf{G}$. Conversely, a zig-zag diagram in $\mathbf{G}$ induces a table of dimensions. We naturally assign a notion of height for every table of dimensions. 

\begin{definition}
The \emph{height} of a table of dimensions $\vec n$ is
\[
\mathbf{hgt}(\vec n)=\max_i n_i.
\]
\end{definition}

Moreover, we provide a naming convention for the limit (or colimit) of the induced zig-zag diagram arising from a table of dimensions.

\begin{definition}
Let $A:\mathbf{G}^{op}\to C$ be a globular object and let $\vec n$ be a
table of dimensions. The limit of the induced zig-zag diagram in $C$, when it
exists, is called the \emph{globular product}.

Let $A:\mathbf{G}\to C$ be a co-globular object and let $\vec n$ be a
table of dimensions. The colimit of the induced zig-zag diagram in $C$, when it
exists, is called the \emph{globular sum}.
\end{definition}

Another interpretation, other than that of the Segal condition, is that globular products play the role for globular theories that finite products play for Lawvere theories; they capture and encode the way in which the operations interact with each other.

\begin{definition}
The category $\Theta_0$ has tables of dimensions as objects and
\[
\Theta_0(\vec n,\vec m)
=
[\mathbf{G}^{op},\mathbf{Set}]
(Y(\vec n),Y(\vec m)),
\]
where $Y$ denotes the Yoneda embedding.
\end{definition}

Given this definition, the category $\Theta_0^\mathbf{op}$ comes equipped with the following universal property.

\begin{lemma}[Lemma 2.1 of \cite{Bo2}]
There is a functor
\[
D:\mathbf{G}^{op}\to\Theta_0^{op}
\]
that is universal among globular-product-preserving extensions. Explicitly,
if
\[
A:\mathbf{G}^{\mathbf{op}}\to C
\]
admits globular products, then there exists an essentially unique
globular-product-preserving extension
\[
A':\Theta_0^{op}\to C.
\]
Moreover, $A'(\vec n)$ is the globular product associated to $\vec n$.
\end{lemma}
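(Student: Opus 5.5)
The plan is to identify $\Theta_0^{op}$ with the free completion of $\mathbf{G}^{op}$ under globular products. Then the universal property becomes the usual statement about extending along the Yoneda-type embedding.

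First I would define $D:\mathbf{G}^{op}\to\Theta_0^{op}$. Send $n$ to the length-one table of dimensions $(n)$. Since $Y((n))$ is the representable $Y(n)$, full faithfulness of the Yoneda embedding gives
\[
\Theta_0((n),(m))\cong \mathbf{G}(n,m),
\]
so $D$ is fully faithful.

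Next I would show that $\Theta_0^{op}$ has globular products and that $D$ exhibits $(\vec n)$ as the globular product of the $(n_i)$. Write $\vec n=(n_1,\ldots,n_{2k+1})$. In $[\mathbf{G}^{op},\mathbf{Set}]$ the presheaf $Y(\vec n)$ is defined as the colimit (globular sum) of the zig-zag
\[
Y(n_1)\leftarrow Y(n_2)\rightarrow Y(n_3)\leftarrow\cdots,
\]
with maps given by iterated sources and targets. Because $\Theta_0$ is by definition a full subcategory of presheaves, the presheaf colimit is a colimit in $\Theta_0$, that is, a limit in $\Theta_0^{op}$. Hence $(\vec n)$ is the globular product of the $(n_i)$ in $\Theta_0^{op}$, and $D$ carries the zig-zags to these cones.

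Given $A:\mathbf{G}^{op}\to C$ admitting globular products, I would define $A'(\vec n)$ to be the chosen globular product of the $A(n_i)$. For morphisms, a map $\vec n\to\vec m$ in $\Theta_0^{op}$ is a map $Y(\vec m)\to Y(\vec n)$ of presheaves. By the colimit description of $Y(\vec m)$, such a map is a compatible family of maps $Y(m_j)\to Y(\vec n)$. Each $Y(m_j)$ is representable and $Y(\vec n)$ is a finite globular pasting, so each map $Y(m_j)\to Y(\vec n)$ is an element of $Y(\vec n)_{m_j}$.

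The key step, and the main obstacle, is turning such an element into a morphism $A'(\vec n)\to A(m_j)$ naturally in $m_j$. The element is a cell of the globular set $Y(\vec n)$, which lies in some $Y(n_i)$ or in an overlap $Y(n_{i'})$, so it is represented by a map $m_j\to n_i$ in $\mathbf{G}$. Composing the projection $A'(\vec n)\to A(n_i)$ with $A$ of that map gives the required morphism. This is well defined because the cone relations of the limit identify the two descriptions of elements lying in overlaps. One checks this using the explicit combinatorics: the elements of $Y(\vec n)_p$ are the disjoint union of those of the $Y(n_i)_p$ modulo gluing along the $Y(n_{2i})_p$. The compatibility across $j$ then assembles these into a map into the limit $A'(\vec m)$. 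Functoriality follows from naturality of the element description.

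Finally, I would verify that $A'$ preserves globular products and that $A'\circ D=A$; both hold by construction. For essential uniqueness: any globular-product-preserving extension $B$ with $BD\cong A$ has $B(\vec n)$ a globular product of the $A(n_i)$. Hence $B(\vec n)\cong A'(\vec n)$ canonically. The action on morphisms is forced because every morphism of $\Theta_0^{op}$ into $(\vec m)$ is determined by its composites with the projections to the $(m_j)$, and these composites factor through projections to the $(n_i)$ followed by images of $\mathbf{G}$-maps. This gives a unique natural isomorphism $B\cong A'$.
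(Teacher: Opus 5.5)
Your proof is correct. The paper gives no argument of its own for this lemma; it quotes Bourke without a proof, so yours is a self-contained proof rather than a reconstruction. Each step holds:
\begin{itemize}
\item A fully faithful functor reflects colimits, so $\vec n$ is the globular product of the $(n_i)$ in $\Theta_0^{op}$.
\item Colimits of presheaves are pointwise, so every $x\in Y(\vec n)_m$ is represented by some $g\colon m\to n_i$.
\item The cone equations of $A'(\vec n)$ respect each generating identification of the pushout of sets. Hence $x\mapsto A(g)\circ\pi_i$ is well defined and natural in $m$.
\item That naturality is exactly what gives both compatibility across $j$ and functoriality.
\item Uniqueness follows because every morphism $\vec n\to(m_j)$ of $\Theta_0^{op}$ is a projection $\vec n\to(n_i)$ followed by some $D(g)$.
\end{itemize}

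Your route is more hands-on than the usual free-completion argument. That argument observes that, for any object $c$ of $C$, cones with vertex $c$ over $A(n_1)\to A(n_2)\leftarrow A(n_3)\to\cdots$ are the same as presheaf maps $Y(\vec n)\to C(c,A(-))$, naturally in $c$. Hence $C(c,A'(\vec n))\cong[\mathbf{G}^{op},\mathbf{Set}](Y(\vec n),C(c,A(-)))$. A map $Y(\vec m)\to Y(\vec n)$ then acts by precomposition, and Yoneda yields $A'(\vec n)\to A'(\vec m)$.

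What the standard argument buys:
\begin{itemize}
\item Well-definedness and functoriality come for free.
\item No description of $Y(\vec n)_p$ as a glued union of the $\mathbf{G}(p,n_i)$ is needed.
\item It works verbatim for free completions under any class of small diagram shapes.
\end{itemize}
Your element-to-morphism assignment is precisely the case $c=A'(\vec n)$ of this bijection applied to $\mathrm{id}_{A'(\vec n)}$, written out by hand.

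What your version buys is the explicit factorization of maps into one-object tables through projections. This makes the rigidity of $\Theta_0^{op}$ visible and drives your uniqueness argument directly.

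One small point: if ``universal'' is read as restriction along $D$ being an equivalence between globular-product-preserving functors and globular objects admitting globular products, you would also need unique extension of natural transformations. Your factorization argument gives this verbatim, but the statement as written only asks for essential uniqueness of the extension.
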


We now give the theory for admissible pairs, lifts, and contractibility. Viable options for higher category theory always come equipped with a notion of contractibility for the operations.

\begin{definition}
Let $A$ be a globular object. An \emph{admissible pair} of dimension $n$
consists of parallel maps
\[
\begin{tikzpicture}[node distance=2cm]
    \node (A) {$X$};
    \node (B) [right of=A] {$A(n)$};
    \draw[transform canvas={yshift=0.3ex},->]
        (A) to node[above=3] {$f$} (B);
    \draw[transform canvas={yshift=-0.3ex},->,swap]
        (A) to node[below=3] {$g$} (B);
\end{tikzpicture}
\]
where $X$ is a globular product of height at most $n+1$ and either $n=0$ or
\[
A(s)\circ f=A(s)\circ g\qquad A(t)\circ f=A(t)\circ g.
\]
\end{definition}

\begin{definition}
A \emph{lift} of an admissible pair $(f,g)$ is a morphism
\[
\delta_{f,g}:X\to A(n+1)
\]
such that
\[
A(s)\circ \delta_{f,g}=f\qquad  A(t)\circ \delta_{f,g}=g.
\]
\end{definition}

\begin{definition}
A globular object is \emph{contractible} if every admissible pair admits a
lift. A globular theory is contractible when regarded as a globular object.
\end{definition}

In addition to being contractible, we require a theory that is obtained freely. We require a freeness condition since it is what allows us to express the necessary axioms for fully weak $\infty$-groupoids on models. 

\begin{definition}
An $(\infty,0)$\emph{-coherator} is a contractible globular theory obtained as
the colimit of a sequence
\[
\Theta_0^{op}=C_0
\to
C_1
\to
C_2
\to\cdots,
\]
where each morphism is obtained by freely adjoining lifts for a chosen collection of admissible
pairs at every step.
\end{definition}

\begin{notation}
If $C$ is an $(\infty,0)$-coherator, we write
\[
\infty\mathbf{Gpd}_C
=
\mathbf{Mod}_{\Theta_0^{op}}(C)
\]
for the category of globular-product-preserving functors $X:C\to\mathbf{Set}$. Its objects are called \emph{Grothendieck $\infty$-groupoids}.
\end{notation}

\subsection*{The algebraic coherator}

In this subsection, we construct the algebraic weak factorization system that will generate a theory we use for the rest of the paper. The generating maps encode the operations by freely adjoining lifts to admissible pairs. Applying Garner's algebraic small object argument then produces a fibrant replacement monad whose fibrant replacement of the initial globular theory is an $(\infty,0)$-coherator. Moreover, it allows us to obtain an algebraic choice of compositions and whiskering operations, so that everything that we do in this paper is done in a completely algebraic fashion. We begin by defining our required presentable objects and generating maps.
\begin{definition}\label{spheres_in_inf_gpds}
Let $\vec{p}$ be a table of dimensions and let $k\geq0$ satisfy
\[
\mathbf{hgt}(\vec{p})\leq k+1.
\]
The \emph{$(\vec{p},k)$-sphere} is the globular theory
\[
S_{\vec{p},k}
\]
obtained from $\Theta_0^\mathbf{op}$ by freely adjoining an admissible pair
\[
\begin{tikzpicture}[node distance=2cm]
    \node (A) {$\vec{p}$};
    \node (B) [right of=A] {$k$};

    \draw[transform canvas={yshift=0.4ex},->]
        (A) to node[above] {$f$} (B);

    \draw[transform canvas={yshift=-0.4ex},->]
        (A) to node[below] {$g$} (B);
\end{tikzpicture}
\]
subject to the relations
\[
s\circ f=s\circ g
\qquad\text{and}\qquad
t\circ f=t\circ g
\]
whenever $k\geq1$. Equivalently, $S_{\vec{p},k}$ is the free globular theory containing a distinguished admissible pair of dimension $k$ with domain $\vec{p}$. There is a canonical inclusion
\[
\Theta_0^\mathbf{op}\hookrightarrow S_{\vec{p},k}.
\]
\end{definition}

\begin{definition}\label{disks_in_inf_gpds}
The \emph{$(\vec{p},k)$-disk}
\[
D_{\vec{p},k}
\]
is obtained from $S_{\vec{p},k}$ by freely adjoining a lift
\[
\delta_{f,g}:\vec{p}\to k+1
\]
of the distinguished admissible pair so that the diagram
\[
\begin{tikzpicture}[node distance=2cm]
    \node (A) {$\vec{p}$};
    \node (B) [right of=A] {$k$};
    \node (C) [above of=B] {$k+1$};

    \draw[transform canvas={yshift=0.4ex},->]
        (A) to node[above] {$f$} (B);

    \draw[transform canvas={yshift=-0.4ex},->]
        (A) to node[below] {$g$} (B);

    \draw[transform canvas={xshift=-0.4ex},->]
        (C) to node[left] {$s$} (B);

    \draw[transform canvas={xshift=0.4ex},->]
        (C) to node[right] {$t$} (B);

    \draw[->]
        (A) to node[left] {$\delta_{f,g}$} (C);
\end{tikzpicture}
\]
commutes. Thus $D_{\vec{p},k}$ is the free globular theory obtained by specifying a filler for the distinguished admissible pair. It is equipped with the canonical inclusion
\[
\Theta_0^\mathbf{op}\hookrightarrow D_{\vec{p},k}.
\]
\end{definition}

\begin{notation}\label{incl_of_sphere_into_disks}
For every table of dimensions $\vec{p}$ and every $k\geq0$, there is a canonical inclusion
\[
j_{\vec{p},k}:S_{\vec{p},k}\to D_{\vec{p},k}.
\]
We write
\[
I=
\left\{
j_{\vec{p},k}
:
S_{\vec{p},k}\to D_{\vec{p},k}
\;\middle|\;
\vec{p}\in\mathbf{ob}(\Theta_0^\mathbf{op}),\;
k\geq0
\right\}.
\]
\end{notation}

\begin{lemma}\label{admiss_1}
Each object $S_{\vec{p},k}$ and $D_{\vec{p},k}$ is presentable in
$\mathbf{Th}_{\Theta_0^\mathbf{op}}$.
Moreover, the set $I$ is admissible for Garner's algebraic small object argument.
\end{lemma}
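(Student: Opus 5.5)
The plan is to work inside the category $\mathbf{Th}_{\Theta_0^\mathbf{op}}$ of globular theories, that is, identity-on-objects functors out of $\Theta_0^\mathbf{op}$ that preserve globular products. First I establish that this category is locally finitely presentable. I would present it as the category of models of a limit sketch, or dually as the algebras for a finitary monad on the presheaf-like category of $\mathbf{ob}(\Theta_0)$-graded hom-sets. The operations are composition and the action of $\Theta_0^\mathbf{op}$. The globular-product condition amounts to an isomorphism between hom-sets into $\vec{n}$ and limits of hom-sets into the entries $n_i$; this is a finite limit condition because each table of dimensions has finite length. Models of a small finite limit sketch form a locally finitely presentable category, and there the finitely presentable objects are exactly those given by finitely many generators and relations. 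Equivalently one can cite Bourke's description of globular theories as a reflective, finitary-monadic subcategory.

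Next I show $S_{\vec{p},k}$ and $D_{\vec{p},k}$ are finitely presentable by writing down their universal properties. For any theory $T$:
\[
\mathbf{Th}(S_{\vec{p},k},T)\cong\{(f,g)\in T(\vec{p},k)^2 : s f=s g,\ t f=t g\},
\]
where the condition is dropped when $k=0$. Similarly,
\[
\mathbf{Th}(D_{\vec{p},k},T)\cong\{(f,g,\delta) : \delta\in T(\vec{p},k+1),\ f=s\delta,\ g=t\delta\}.
\]
Each of these is a finite limit of the representable-like functors $T\mapsto T(\vec{p},m)$. So it suffices to check that $T\mapsto T(\vec{p},m)$ preserves filtered colimits. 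That holds because filtered colimits of globular theories are computed hom-wise: a filtered colimit of hom-sets still satisfies the globular-product isomorphisms, since filtered colimits commute with finite limits in $\mathbf{Set}$. This is the analogue of Proposition \ref{pure_colimit_models_colimits}, with the sketch now involving finite limits. Since filtered colimits commute with finite limits, $\mathbf{Th}(S_{\vec{p},k},-)$ and $\mathbf{Th}(D_{\vec{p},k},-)$ preserve filtered colimits. Hence both objects are ($\aleph_0$-)presentable.

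For admissibility, recall Garner's hypothesis: the category should be cocomplete, and every domain of a map in $I$ should be small, i.e. $\kappa$-presentable for some $\kappa$. Alternatively one can use that the category is locally presentable. Local presentability of $\mathbf{Th}_{\Theta_0^\mathbf{op}}$ from the first step gives cocompleteness. $I$ is a set, because $\mathbf{ob}(\Theta_0^\mathbf{op})\times\mathbb{N}$ is a set. The previous step gives presentability of the domains $S_{\vec{p},k}$. So Garner's algebraic small object argument applies and produces the cofibrantly generated awfs, and in particular the fibrant replacement $R_I$.

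The main obstacle is the first step: making rigorous that filtered colimits in $\mathbf{Th}_{\Theta_0^\mathbf{op}}$ are computed hom-wise. Colimits of theories are in general not hom-wise, because composition can generate new morphisms. My first approach is to show that for a filtered diagram, the hom-wise colimit already carries a well-defined composition. This works because any two composable morphisms live at a common stage of the diagram. Globular products are then preserved by the commutation of filtered colimits with finite limits. If that becomes messy, the fallback is to cite Bourke's result that $\mathbf{Th}_{\Theta_0^\mathbf{op}}$ is locally finitely presentable, and to observe that $S_{\vec{p},k}$ and $D_{\vec{p},k}$ are presented by finitely many generators and relations.
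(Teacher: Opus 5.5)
Your proof is correct, but it takes a different route from the paper. The paper gives no argument: its proof is a one-line citation of Subsection~3.11 and Lemma~3.12 of \cite{Malt}.

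You instead prove the statement from scratch, in three steps.
\begin{enumerate}
\item You realize $\mathbf{Th}_{\Theta_0^\mathbf{op}}$ as the models of a small finite limit sketch. The sketch encodes composition, the action of $\Theta_0^\mathbf{op}$, and the globular-product isomorphisms $T(\vec n,\vec m)\cong\lim_j T(\vec n,m_j)$.
\item You check that filtered colimits are computed hom-wise. Composition is well defined at a common stage, and the globular-product condition survives because filtered colimits commute with finite limits in $\mathbf{Set}$.
\item You identify $\mathbf{Th}(S_{\vec p,k},-)$ and $\mathbf{Th}(D_{\vec p,k},-)$ with finite limits of the evaluation functors $T\mapsto T(\vec p,m)$. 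In fact $\mathbf{Th}(D_{\vec p,k},-)\cong(-)(\vec p,k+1)$, since $f=s\delta$ and $g=t\delta$ are determined by $\delta$, and the globular identities make them parallel automatically.
\end{enumerate}
This yields a sharper conclusion than the lemma states: both objects are finitely presentable, and the ambient category is locally finitely presentable. That gives Garner's hypotheses directly, either as cocompleteness plus smallness of the generating maps' domains, or simply as local presentability.

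Your route buys self-containment, and it also supplies something the paper silently assumes. Limits in $\mathbf{Th}_{\Theta_0^\mathbf{op}}$ are hom-wise too: the coslice forgetful functor creates them, and the globular-product isomorphisms are preserved because limits commute with limits. Hence the functors you write down preserve both limits and filtered colimits. By the representability theorem for locally presentable categories, the ``freely adjoined'' theories $S_{\vec p,k}$ and $D_{\vec p,k}$ exist, whereas the paper's definitions take them for granted.

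The citation, for its part, buys brevity and keeps the lemma in the same framework as the later appeal to Maltsiniotis for the coherator $AC$.
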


\begin{proof}
This follows from Subsection~3.11 and Lemma~3.12 of \cite{Malt}.
\end{proof}

\begin{notation}
Let
\[
(L_I,E_I,R_I,\delta_I)
\]
denote the algebraic weak factorization system on
$\mathbf{Th}_{\Theta_0^\mathbf{op}}$
cofibrantly generated by the set $I$.
\end{notation}

\begin{lemma}
Let
\[
J^{AC}:\Theta_0^\mathbf{op}\to AC
\]
be the fibrant replacement of the initial object
\[
\mathrm{id}_{\Theta_0^\mathbf{op}}
\]
with respect to the algebraic weak factorization system
$(L_I,E_I,R_I,\delta_I)$.
Then $AC$ is an $(\infty,0)$-coherator.
\end{lemma}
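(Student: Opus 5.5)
The plan is to verify the two defining conditions of an $(\infty,0)$-coherator for $AC$: that it is obtained as a sequential colimit of theories $\Theta_0^\mathbf{op}=C_0\to C_1\to\cdots$, each step freely adjoining lifts for a chosen collection of admissible pairs, and that $AC$ is contractible. Both should come from unwinding Garner's algebraic small object argument for the set $I$ of Notation~\ref{incl_of_sphere_into_disks}, applied to the map $\Theta_0^\mathbf{op}\to *$ (the terminal theory). Lemma~\ref{admiss_1} guarantees that the argument applies and that the transfinite construction converges at $\omega$. This is because the $S_{\vec p,k}$ and $D_{\vec p,k}$ are presentable; more precisely, they are finitely presentable, being presented by finitely many generators and relations over $\Theta_0^\mathbf{op}$.

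For the \textbf{freeness} condition, I would write out the stages of Garner's construction. Set $C_0=\Theta_0^\mathbf{op}$. At stage $m$ one forms the coproduct over all squares from some $j_{\vec p,k}$ to $C_m\to *$. Such a square is the same as an admissible pair of dimension $k$ in $C_m$ with domain $\vec p$, where $\mathbf{hgt}(\vec p)\le k+1$. Pushing out $\coprod j_{\vec p,k}$ along the induced map $\coprod S_{\vec p,k}\to C_m$ freely adjoins a lift $\delta_{f,g}$ for each such pair, since by Definition~\ref{disks_in_inf_gpds} $D_{\vec p,k}$ is exactly $S_{\vec p,k}$ with a lift freely added. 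The algebraic version differs from Quillen's by a further coequalizer, which identifies lifts already adjoined at earlier stages (the pairs coming from $C_{m-1}$). This only changes which collection of admissible pairs receives new lifts at stage $m$, namely the genuinely new pairs. So each $C_m\to C_{m+1}$ is still ``freely adjoining lifts for a chosen collection of admissible pairs'', and $AC=\operatorname{colim}_m C_m$.

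For \textbf{contractibility}, take an admissible pair $(f,g)\colon \vec p\rightrightarrows k$ in $AC$. Since $S_{\vec p,k}$ is finitely presentable in $\mathbf{Th}_{\Theta_0^\mathbf{op}}$, the corresponding map $S_{\vec p,k}\to AC$ factors through some $C_m$. It therefore receives a lift in $C_{m+1}$, and hence in $AC$. Equivalently, $AC\to *$ is an algebraic $R_I$-map, so it has the right lifting property against every $j_{\vec p,k}$, and that lifting property is exactly contractibility.

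The \textbf{main obstacle} is the first step. One has to check carefully that Garner's coequalizer step does not impose extra relations beyond identifying duplicate lifts, so that the stages remain ``free''. This needs the filtered colimits involved to be computed correctly in $\mathbf{Th}_{\Theta_0^\mathbf{op}}$, and one should confirm that these colimits are reflected along the forgetful functor to globular sets. I would handle this by citing Maltsiniotis's description of the free theories in \cite{Malt} together with Garner's explicit description of the $\omega$-chain in \cite{Garner2009}.
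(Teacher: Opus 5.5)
Your contractibility argument is fine. Most cleanly, $AC\to *$ is the free $R_I$-algebra on $\Theta_0^{\mathbf{op}}\to *$, so it lifts against every $j_{\vec p,k}$, and that is exactly contractibility. The gap is in the freeness half, at exactly the point you called the ``main obstacle'' without resolving it.

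In Garner's construction, $C_{m+1}$ is a quotient of the one-step extension of $C_m$, in which lifts are freely adjoined for \emph{all} admissible pairs of $C_m$. For every admissible pair $\alpha$ of $C_{m-1}$, the quotient identifies the new lift of its image $x\alpha$ under $x\colon C_{m-1}\to C_m$ with the lift of $\alpha$ already present in $C_m$. This amounts to merely omitting duplicate lifts only if distinct admissible pairs of $C_{m-1}$ remain distinct in $C_m$. If $\alpha\neq\alpha'$ but $x\alpha=x\alpha'$, the quotient forces the lifts of $\alpha$ and $\alpha'$ already in $C_m$ to coincide. Unless they were already equal, that is a new relation among existing morphisms. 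Then $C_m\to C_{m+1}$ is not a free adjunction of lifts, and your tower does not witness the definition.

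What is missing is a lemma that pushouts of coproducts of the maps $j_{\vec p,k}$ in $\mathbf{Th}_{\Theta_0^{\mathbf{op}}}$ are faithful (injective on hom-sets). Given that lemma, an induction on $m$ shows each $C_m\to C_{m+1}$ is the pushout of $\coprod j_{\vec p,k}$ indexed by the admissible pairs of $C_m$ not coming from $C_{m-1}$, which is what the definition requires.

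The lemma is not formal. Suppose $\psi\circ s=\psi'\circ t$ holds in $C$ for some $\psi,\psi'\colon k\to r$. Adjoining a lift $\delta$ of $(f,g)$ then forces $\psi\circ f=\psi\circ s\circ\delta=\psi'\circ t\circ\delta=\psi'\circ g$. So one must show every such consequence already holds in $C$; only the instances coming from the globular identities are covered by admissibility.

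Your proposed remedies do not supply this lemma. Garner's description of the $\omega$-chain gives you the coequalizer but says nothing about faithfulness in $\mathbf{Th}_{\Theta_0^{\mathbf{op}}}$. In the ordinary small object argument each stage is literally a pushout of $\coprod j_{\vec p,k}$, so the issue never arises there; it is precisely what an adaptation to Garner's version, such as the paper's one-line appeal to Maltsiniotis's Theorem~3.14, must add.

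Checking that colimits are reflected along a forgetful functor is also not the relevant test. The forgetful functor from $\mathbf{Th}_{\Theta_0^{\mathbf{op}}}$ to hom-sets preserves the filtered colimit $\operatorname{colim}_m C_m$, which is all your contractibility argument needs. It does not preserve the pushouts and coequalizers where a collapse would occur.
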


\begin{proof}
The proof is obtained by adapting the argument of Theorem 3.14 of \cite{Malt}.
\end{proof}

We refer to $AC$ as the \emph{algebraic coherator}. From now on, we write
\[
\infty\mathbf{Gpd}:=\mathbf{Mod}(AC)
\]
and mean a model over $AC$ whenever we call upon an $\infty$-groupoid.

We finish this subsection by listing operations that we will use in a future section to build the cylinder object.
\begin{notation}[Algebraic Composition and Whiskering]\label{algeb_whisk_and_comp}
For all $n\geq 1$, the algebraic small object argument generates a choice of $n$-fold composition operation along an $(n-1)$-boundary, that we denote by 
\[
c_{n,0}:(n,n-1,n)\to n,
\]
as the specified choice of lift for the following admissible pair, where $\epsilon_1$ and $\epsilon_2$ are the projections onto the first and second copy of $1$, respectively.
\[
\begin{tikzpicture}[node distance=3cm]
    \node (A) {$(n,n-1,n)$};
    \node (B) [right of=A] {$n$};
    \draw[transform canvas={yshift=0.3ex},->]
        (A) to node[above=3] {$s\circ \epsilon_1$} (B);
    \draw[transform canvas={yshift=-0.3ex},->,swap]
        (A) to node[below=3] {$t\circ \epsilon_2$} (B);
\end{tikzpicture}
\]
For all $n\geq 1$ and $i\geq 0$, we define the \emph{left and right whiskering operations}, denoted by
\[
c^l_{n,i+1}:(n+i+1,n-1,n)\to n+i+1\qquad c^r_{n,i}:(n,n-1,n+i+1)\to n+i+1,
\]
as the specified choice of lift generated by the algebraic small object argument for the following admissible pairs, respectively.
\[
\begin{tikzpicture}[node distance=5cm]
    \node (A) {$(n+i+1,n-1,n)$};
    \node (B) [right of=A] {$n+i$};
    \draw[transform canvas={yshift=0.3ex},->]
        (A) to node[above=3] {$c^l_{n,i}\circ (s,\mathbf{id}_n)$} (B);
    \draw[transform canvas={yshift=-0.3ex},->,swap]
        (A) to node[below=3] {$c^l_{n,i}\circ (t,\mathbf{id}_n)$} (B);
\end{tikzpicture}
\]

\[
\begin{tikzpicture}[node distance=5cm]
    \node (A) {$(n,n-1,n+i+1)$};
    \node (B) [right of=A] {$n+i$};
    \draw[transform canvas={yshift=0.3ex},->]
        (A) to node[above=3] {$c^r_{n,i}\circ (\mathbf{id}_n,s)$} (B);
    \draw[transform canvas={yshift=-0.3ex},->,swap]
        (A) to node[below=3] {$c^r_{n,i}\circ (\mathbf{id}_n,t)$} (B);
\end{tikzpicture}
\]
\end{notation}

We call these operations algebraic because they were specially provided to us by the algebraic small object argument rather than being chosen. For abuse of notation and to save space with writing in a later section, we write 
\[
c^l_{n,0}=c^r_{n,0}=c_{n,0}
\]
for all $n\geq 1$.

\section{Co-globular infinity groupoids and cylinder objects}
In this section, we construct the cylinder object in the category of co-globular objects of infinity groupoids. We begin with defining the projective generators in the category of co-globular infinity groupoids.

\subsection*{Projective Generators and Extensions}
We now define important adjunctions that we use to construct the cylinder object. First, to provide a more compact presentation of the present mathematics, we set
\[
\mathcal M
=
[\mathbb G,\infty\mathbf{Gpd}]
\]
denote the forgetful functor and
\[
\operatorname{ev}_k:
[\mathbb G,\infty\mathbf{Gpd}]
\to
\infty\mathbf{Gpd}
\]
denote evaluation at $k$ for all $k\geq0$. 

For all $k\geq 0$, $U_k$ has a left adjoint $F_k$ defined by sending an infinity groupoid $X$ to an object of $\mathcal{M}$, denote it by $F_k(X)$, given by 
\[
F_k(X)_m=\begin{cases}
    \emptyset\qquad \text{if }m<k,\\
    X\qquad \text{if }m= k,\\
    X\coprod X\qquad \text{if }m> k
\end{cases}
\]
on the objects of $\mathbb{G}$. On morphisms, we have that 
\[
F_k(X)(t)=F_k(X)(s):\emptyset\to F_{k}(X)(i+1)
\]
is the initial map for $0\leq i< k$,
\[
F_k(X)(s),F_k(X)(t):F_{k}(X)(k)\rightrightarrows F_{k}(X)(k+1)
\]
are the coproduct inclusion maps of $X$ into the first and second copy, respectively,
and 
\[
\mathbf{id}_{X\coprod X}=F_k(X)(s)=F_k(X)(s):F_{k}(X)(i)\to F_{k}(X)(i+1)
\]
for $i>k$. We now prove that $F_k$ is left adjoint to $U_k$ for all $k\geq 0$.

\begin{proposition}\label{left_adjoint_to_evaluation}
The functor $F_k$ is left adjoint to $U_k$ for all $k\geq 0$.
\end{proposition}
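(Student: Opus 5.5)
We identify $F_k(X)$ with the tensor $X\otimes \mathbb G(k,-)$, i.e.\ the left Kan extension of $X$ along the inclusion of the object $k$ into $\mathbb G$. The adjunction then amounts to exhibiting a natural bijection
\[
\mathcal M(F_k(X),Y)\cong \infty\mathbf{Gpd}(X,Y_k)
\]
for every $\infty$-groupoid $X$ and co-globular $\infty$-groupoid $Y$. Here $U_k=\operatorname{ev}_k$ is evaluation at $k$. We make this bijection explicit rather than invoking the general Kan extension machinery, because we must also check that $F_k(X)$ really is the object displayed.

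The first step is to compute the hom-sets of $\mathbb G$ from the relations $s_{n+1}s_n=t_{n+1}s_n$ and $s_{n+1}t_n=t_{n+1}t_n$. We find that $\mathbb G(k,m)$ is empty for $m<k$, is $\{\mathbf{id}\}$ for $m=k$, and has exactly two elements for $m>k$, namely the composites ending in $s$ and in $t$. Here we read the definition of $F_k$ as sending both structure maps in degrees above $k$ to the identity of $X\coprod X$; the displayed formula contains the typo $F_k(X)(s)=F_k(X)(s)$. It follows that $\coprod_{\mathbb G(k,m)}X$ agrees levelwise with the formula for $F_k(X)_m$, and the functoriality of $F_k(X)$ is exactly the one described. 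Since $\infty\mathbf{Gpd}$ is a category of models of a limit sketch, it is cocomplete, so these coproducts exist. Note that $\emptyset$ is the initial $\infty$-groupoid, which is not literally the empty set in degree zero unless we check it; only its initial property is used.

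The main step is the bijection itself. Given $\phi:F_k(X)\to Y$, we send it to its component $\phi_k:X\to Y_k$. Conversely, given $f:X\to Y_k$, we define components as follows:
\begin{itemize}
\item for $m<k$, the component is the unique map out of $\emptyset$;
\item for $m=k$, the component is $f$;
\item for $m>k$, the component is the copairing
\[
\bigl(Y(s^{m-k})\circ f,\; Y(t\,s^{m-k-1})\circ f\bigr):X\coprod X\to Y_m,
\]
where $s^{m-k}$ and $t\,s^{m-k-1}$ denote the two morphisms $k\to m$.
\end{itemize}

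Naturality must be checked on the generators $s_i,t_i$ in three ranges:
\begin{itemize}
\item For $i<k$ it is trivial, since the source is initial.
\item For $i=k$ it holds by construction of the copairing against the two coproduct inclusions.
\item For $i>k$ the structure maps of $F_k(X)$ are identities. So we need $Y(s_i)$ and $Y(t_i)$ to agree after composing with the two composites $k\to i$. This is precisely where the globular relations are used. We expect this to be the only point requiring care, since the two relations must be applied iteratively to reduce every composite $k\to m$ to one of the two normal forms.
\end{itemize}

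Finally, the two assignments are mutually inverse. A natural transformation out of $F_k(X)$ is determined by its $k$-component, because each coproduct summand in degree $m>k$ is the image of $X=F_k(X)_k$ under a structure map. Naturality of the bijection in $X$ and in $Y$ is immediate from the formulas. Equivalently, one can package the argument as a unit $X\to (F_kX)_k=X$ given by the identity, together with a counit $F_k(Y_k)\to Y$ given by the copairings above, and verify the triangle identities. Both reduce to the same computation.
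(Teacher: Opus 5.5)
Your proof is correct and is essentially the paper's: the paper takes the unit to be the identity (using $U_k\circ F_k=\mathbf{id}$) and the counit to have components $!$, $\mathbf{id}_{Y_k}$ and $(s^{n-k},t^{n-k})$, which is exactly your copairing, so your bijection $\phi\mapsto\phi_k$ is the same adjunction written in hom-set form. Yours is more careful, since you use the globular relations to check that the copairings commute with the structure maps above degree $k$, which the paper's appeal to the triangle identities skips. One notational fix: write the second morphism $k\to m$ as $t^{m-k}$ (equivalently $s^{m-k-1}\circ t$) rather than $t\,s^{m-k-1}$, because read as a composite $t\circ s^{m-k-1}$ the latter equals $s^{m-k}$ once $m\geq k+2$.
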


\begin{proof}
Fix $k\geq 0$. Before we go further, we remark that 
\[
U_k\circ F_k=\mathbf{id}_{\infty\mathbf{Gpd}}.
\]
Now define a natural transformation 
\[
\eta:\mathbf{id}_{\infty\mathbf{Gpd}}\to U_kF_k
\]
by setting its $X$ component to be identity for all $\infty$-groupoids $X$, i.e., $\eta^X=\mathbf{id}_X$. On the other hand, define a natural transformation 
\[
\epsilon:F_kU_k\to \mathbf{id}_{\mathcal{M}}
\]
by setting its $X$ component to be defined by 
\[
\epsilon^X_n=\begin{cases}
    !:\emptyset\to X_n\qquad\text{if }n<k,\\
    \mathbf{id}_{X_k}\qquad \text{if }n=k, \\
    (s^{n-k},t^{n-k})\qquad \text{if }n>k.
\end{cases}
\]
This is now an adjunction because having that 
\[
U_k\circ F_k=\mathbf{id}_{\infty\mathbf{Gpd}}
\]
forces all the maps involved in the two triangle equalities to be identities.
\end{proof}

The following lemmas hold as a result of $F_k$ being a left adjoint.

\begin{lemma}\label{Yoneda_properties_1}
A map
\[
F_k(D^r)\to X
\]
is equivalent to the choice of an $r$-cell of the $\infty$-groupoid
$X_k$ for all $k,r\geq 0$.
\end{lemma}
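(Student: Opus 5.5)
The plan is to combine the adjunction of Proposition~\ref{left_adjoint_to_evaluation} with the Yoneda lemma in $\infty\mathbf{Gpd}$. I read $D^r$ here as the representable $\infty$-groupoid $AC(r,-)$, the free $\infty$-groupoid on one $r$-cell; the $U_k$ in the proposition is the evaluation functor $\operatorname{ev}_k$. The adjunction $F_k\dashv U_k$ then gives a bijection, natural in $X\in\mathcal{M}$,
\[
\mathcal{M}(F_k(D^r),X)\cong \infty\mathbf{Gpd}(D^r,U_k X)=\infty\mathbf{Gpd}(D^r,X_k).
\]
Explicitly, a map $\phi:F_k(D^r)\to X$ is sent to $U_k(\phi)\circ\eta^{D^r}$. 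Since $\eta$ is the identity, this is just the component $\phi_k:D^r\to X_k$. The inverse sends $\psi:D^r\to X_k$ to $\epsilon^X\circ F_k(\psi)$.

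Next I apply the Yoneda lemma. An $\infty$-groupoid is a globular-product-preserving functor $AC\to\mathbf{Set}$, and representables $AC(\vec n,-)$ preserve all limits, hence globular products, so $D^r=AC(r,-)$ is an object of $\infty\mathbf{Gpd}$. Because $\infty\mathbf{Gpd}$ is a full subcategory of $[AC,\mathbf{Set}]$, Yoneda gives
\[
\infty\mathbf{Gpd}(AC(r,-),X_k)\cong X_k(r),
\]
natural in $X_k$, where $\psi\mapsto\psi_r(\mathbf{id}_r)$. By definition, $X_k(r)$ is the set of $r$-cells of the $\infty$-groupoid $X_k$. Composing the two bijections proves the lemma: a map $\phi:F_k(D^r)\to X$ corresponds to the $r$-cell $(\phi_k)_r(\mathbf{id}_r)$ of $X_k$. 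This correspondence is natural in $X$, and the argument holds for all $k,r\geq 0$.

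The only step needing care is the adjunction itself. Proposition~\ref{left_adjoint_to_evaluation} is asserted somewhat briskly, so I would double-check that $\epsilon^X$ is a natural transformation of co-globular objects, i.e.\ that it commutes with the structure maps. At level $n>k$, the component is the map $X_k\coprod X_k\to X_n$ given by the two iterated coface maps $s^{n-k}$ and $t^{n-k}$. In $\mathbf{G}$, the composites $s\circ s^{n-k}$ and $t\circ s^{n-k}$ agree, and likewise for $t^{n-k}$, so the square at level $n\to n+1$ commutes once $s^{n+1-k}$ is identified with $s\circ s^{n-k}$ (or with $t\circ s^{n-k}$, by the globular relations). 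At level $k\to k+1$ the square commutes because $F_k(X)(s)$ and $F_k(X)(t)$ are the two coproduct inclusions. Below level $k$ everything commutes trivially, since the domain is initial.

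With naturality in hand, the triangle identities reduce to $U_k\epsilon\circ\eta U_k=\mathbf{id}$, which holds because $(\epsilon^X)_k=\mathbf{id}$. The other identity, $\epsilon F_k\circ F_k\eta=\mathbf{id}$, holds because at level $n>k$ the component $\epsilon^{F_k(Y)}_n$ is the codiagonal-free map $Y\coprod Y\to Y\coprod Y$ induced by the two inclusions, which is the identity. Once this is confirmed, the lemma follows formally.
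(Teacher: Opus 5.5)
Your proposal is correct and follows the same route as the paper. The paper's proof is the same two-step chain $\mathcal{M}(F_k(D^r),X)\cong\infty\mathbf{Gpd}(D^r,X_k)\cong (X_k)_r$, using the adjunction $F_k\dashv U_k$ and then the Yoneda lemma; your additional check of the naturality of $\epsilon$ and the triangle identities usefully fills in what the paper's proof of Proposition~\ref{left_adjoint_to_evaluation} only asserts from $U_k\circ F_k=\mathbf{id}$.
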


\begin{proof}
By adjunction and the Yoneda lemma, we have the following chain of bijections.
\[
\mathcal{M}(F_k(D^r),X)\cong \infty\mathbf{Gpd}(D^r,X_k)\cong (X_k)_r
\]
This concludes our proof.
\end{proof}

\begin{lemma}\label{Yoneda_properties_2}
A map
\[
F_k(S^{r-1})\to X
\]
is equivalent to the the choice of a pair of parallel $(r-1)$-cells of
$X_k$ for all $k\geq 0$ and $r\geq 1$.
\end{lemma}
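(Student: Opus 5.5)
By Proposition \ref{left_adjoint_to_evaluation}, $F_k$ is left adjoint to $\operatorname{ev}_k=U_k$. Combining this with a Yoneda-type description of maps out of the $\infty$-groupoid $S^{r-1}$, the proof should be the same short chain of bijections as in Lemma \ref{Yoneda_properties_1}:
\[
\mathcal{M}(F_k(S^{r-1}),X)\cong \infty\mathbf{Gpd}(S^{r-1},X_k)\cong \{\text{pairs of parallel }(r-1)\text{-cells of }X_k\}.
\]
The first bijection is the adjunction, natural in $X$. Everything therefore reduces to identifying what a map $S^{r-1}\to Y$ into an $\infty$-groupoid $Y=X_k$ amounts to.

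For this second step, I take $S^{r-1}$ to be the boundary $\infty$-groupoid used in the span $D^{r}\xleftarrow{j}S^{r-1}$: the free $\infty$-groupoid on the globular boundary $\partial\mathbf{G}(-,r)$. This is the colimit in $\infty\mathbf{Gpd}$ of $D^{r-1}\rightrightarrows$ glued along $S^{r-2}$, or inductively the pushout $S^{r-1}=D^{r-1}\coprod_{S^{r-2}}D^{r-1}$ with $S^{-1}=\emptyset$.

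The plan is to argue by induction on $r$.
\begin{itemize}
\item For $r=1$, $S^0=D^0\coprod D^0$. Maps $S^0\to Y$ are then pairs of $0$-cells, by the universal property of the coproduct and Lemma \ref{Yoneda_properties_1} (the Yoneda bijection $\infty\mathbf{Gpd}(D^m,Y)\cong Y_m$). Such pairs are automatically parallel, since admissibility imposes no condition at $n=0$.
\item For $r\geq 2$, a map out of the pushout is a pair $(x,y)$ of $(r-1)$-cells whose restrictions along the two inclusions $S^{r-2}\to D^{r-1}$ agree. By the inductive hypothesis and naturality of Yoneda, these restrictions are $(s x,t x)$ and $(s y,t y)$. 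So the condition is exactly $sx=sy$ and $tx=ty$, i.e. $x$ and $y$ are parallel.
\end{itemize}

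The main obstacle is justifying that $S^{r-1}$ really has this colimit description in $\infty\mathbf{Gpd}$, as opposed to only in globular sets. I would handle it by viewing $D^m$ as the representable $AC(m,-)$, i.e. the image of $\mathbf{G}(-,m)$ under the free functor, which is left adjoint to the underlying-globular-set functor. Being a left adjoint, this free functor preserves the pushout $\partial\mathbf{G}(-,r)=\mathbf{G}(-,r-1)\coprod_{\partial\mathbf{G}(-,r-1)}\mathbf{G}(-,r-1)$ of globular sets. Alternatively, I would bypass colimits entirely by applying the free/forgetful adjunction directly: maps $S^{r-1}\to Y$ correspond to globular maps $\partial\mathbf{G}(-,r)\to UY$. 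Such globular maps are plainly parallel pairs of $(r-1)$-cells, since the nondegenerate cells of the boundary are two $(r-1)$-cells with shared lower boundaries.
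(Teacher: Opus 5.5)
Your proposal is correct and follows the paper's route: the adjunction $F_k\dashv U_k$ of Proposition \ref{left_adjoint_to_evaluation} gives $\mathcal{M}(F_k(S^{r-1}),X)\cong\infty\mathbf{Gpd}(S^{r-1},X_k)$, and a Yoneda-type identification turns the right-hand side into parallel pairs of $(r-1)$-cells. The paper simply asserts that second identification, whereas you actually justify it, either inductively through the pushout $S^{r-1}=D^{r-1}\amalg_{S^{r-2}}D^{r-1}$ or directly through the free/forgetful adjunction with globular sets; this fills a step the paper leaves implicit, since it never defines $S^{r-1}$ as an $\infty$-groupoid.
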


\begin{proof}
By adjunction and the Yoneda lemma, we have the following bijection
\[
\mathcal{M}(F_k(S^{r-1}),X)\cong \infty\mathbf{Gpd}(S^{r-1},X_k)
\]
where the last set is the set of parallel $(r-1)$-cells of
$X_k$.
\end{proof}

\begin{lemma}\label{disk_sphere_projective_calculation}
  Given $r,k\geq 0$, an object $X$ of $\mathcal{M}$, and a map $f:F_{k}(S^{r-1})\to X$, the corresponding pushout
\[
\begin{tikzpicture}
\node (A) at (0,2) {$F_k(S^{r-1})$};
\node (B) at (6,2) {$F_k(D^r)$};
\node (C) at (0,0) {$X$};
\node (D) at (6,0) {$X'$};

\draw[->] (A) -- node[above] {} (B);
\draw[->] (A) -- (C);
\draw[->] (B) -- (D);
\draw[->] (C) -- (D);
\end{tikzpicture}
\]
freely adjoins an $r$-cell to $X_k$ having the prescribed pair of parallel
$(r-1)$-cells as its source and target.
\end{lemma}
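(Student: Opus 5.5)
The plan is to translate the pushout in $\mathcal{M}$ into a pushout in $\infty\mathbf{Gpd}$ via the adjunction of Proposition~\ref{left_adjoint_to_evaluation}, and then identify $X'_k$ by a representability argument. By Lemma~\ref{Yoneda_properties_2}, $f$ corresponds to a parallel pair $(a,b)$ of $(r-1)$-cells of $X_k$, namely a map $\bar f:S^{r-1}\to X_k$. Form the pushout
\[
X_k\cup_{S^{r-1}}D^r
\]
in $\infty\mathbf{Gpd}$. This object is, by definition, the $\infty$-groupoid obtained by freely adjoining an $r$-cell $\gamma$ with $s\gamma=a$ and $t\gamma=b$. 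The content of the lemma is that $X'_k$ is exactly this object, and that the rest of $X'$ is determined accordingly.

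\emph{Step 1: describe maps out of $X'$.} By the universal property of the pushout, a map $X'\to Y$ in $\mathcal{M}$ amounts to a map $g:X\to Y$ together with a map $F_k(D^r)\to Y$ that agrees with $g\circ f$ on $F_k(S^{r-1})$. By Lemmas~\ref{Yoneda_properties_1} and~\ref{Yoneda_properties_2}, this is the same as a map $g$ together with an $r$-cell $c$ of $Y_k$ satisfying $sc=g_k(a)$ and $tc=g_k(b)$. This is precisely the universal property of ``$X$ with a free $r$-cell adjoined at level $k$ between $a$ and $b$.''

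\emph{Step 2: compute levelwise.} Colimits in $\mathcal{M}=[\mathbb G,\infty\mathbf{Gpd}]$ are computed pointwise, so
\[
X'_m\cong X_m\cup_{F_k(S^{r-1})_m}F_k(D^r)_m .
\]
Using the explicit formula for $F_k$, there are three cases.
\begin{itemize}
\item For $m<k$, both added terms are $\emptyset$, so $X'_m=X_m$.
\item For $m=k$, we get $X'_k=X_k\cup_{S^{r-1}}D^r$, which is the free adjunction of the cell $\gamma$.
\item For $m>k$, we get $X'_m=X_m\cup_{S^{r-1}\amalg S^{r-1}}(D^r\amalg D^r)$, glued along $(s^{m-k}a,\,s^{m-k}b)$ and $(t^{m-k}a,\,t^{m-k}b)$. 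This adjoins the images of $\gamma$ under the two coface maps.
\end{itemize}
Here ``the map'' at level $m>k$ means the counit component $(s^{m-k},t^{m-k})$ composed with $f$. This follows from the adjoint correspondence: $f=\epsilon\circ F_k(\bar f)$.

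\emph{Step 3: verify the universal property in $\infty\mathbf{Gpd}$.} To see that $X'_k$ itself has the universal property in $\infty\mathbf{Gpd}$, apply $U_k$. Pointwise evaluation preserves the pushout, so $U_k$ of the square is the square $S^{r-1}\to D^r$, $S^{r-1}\to X_k$, which is a pushout. Then Yoneda ($\infty\mathbf{Gpd}(D^r,-)$ picks out $r$-cells and $\infty\mathbf{Gpd}(S^{r-1},-)$ picks out parallel pairs) identifies maps $X'_k\to Z$ with maps $X_k\to Z$ together with an $r$-cell of $Z$ from the image of $a$ to the image of $b$. That is the claimed freeness.

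The main obstacle is bookkeeping rather than conceptual. First, one must confirm that the top map of the square is $F_k(j)$ and that the adjunct of $f$ really is $\bar f$; this requires $U_kF_k=\mathrm{id}$ and the identity unit. Second, the case $r=0$, where $S^{-1}=\emptyset$, must be handled separately: there the statement reduces to adjoining a free $0$-cell with no boundary condition, which the same pushout computation gives directly.
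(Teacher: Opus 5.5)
Your proposal is correct and is essentially the paper's argument. The paper's proof is the one-line observation that the attaching map $f$ (equivalently, by Lemma~\ref{Yoneda_properties_2}, a parallel pair of $(r-1)$-cells in $X_k$) prescribes the source and target of the freely adjoined $r$-cell, and your levelwise computation via pointwise colimits reproduces the discussion the paper gives right after the lemma, while you also make explicit the adjunction bookkeeping ($f=\epsilon\circ F_k(\bar f)$, $U_kF_k=\mathrm{id}$) and the degenerate case $r=0$ that the paper leaves implicit.
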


\begin{proof}
The attaching map specifies the source and target of the
$r$-cell of $X_k$ which is freely adjoined in the pushout.
\end{proof}

We now need to determine what happens with a map at the other levels. Given a co-globular $\infty$-groupoids and a map $f:F_k(A)\to X$, we have that
\[
f_{k+1}\circ \iota_1=\sigma\circ f_{k}\qquad \text{and}\qquad f_{k+1}\circ \iota_2=\tau\circ f_{k},
\]
where $\iota_1$ and $\iota_2$ are the canonical coproduct structure maps. Moreover, we have that 
\[
f_{m+1}=\sigma\circ f_m=\tau\circ f_m
\]
for all $m>k$. Moreover, the morphism $f_m$ is the initial map for $m<k$. This says that the data determined by the map is rigid in higher dimensions and non-existent in lower dimensions.

We now make more sense of this by explaining what pushing out along a \emph{projective generating cofibration} does.

\begin{definition}
The set of \emph{projective generating cofibrations} is given by 
\[
\{F_k(S^{r-1})\xrightarrow{F_k(j_r)}F_k(D^{r}):k,r\geq 0\}
\]

\end{definition}
We recall from Proposition \ref{pure_colimit_models_colimits} that the colimits of models over pure colimit sketches are computed point-wise. Therefore, for a projective generating cofibration
\[
F_k(S^n)\to F_k(D^{n+1}),
\]
the cell attachment at co-level $k$ is the ordinary boundary inclusions of $\infty$-groupoids.

\[
S^n\to D^{n+1}
\]

Since $F_k(S^n)_m=S^n\coprod S^n$ and $F_k(D^{n+1})_m=D^{n+1}\coprod D^{n+1}$ for every $m> k$, we obtain the following pushout
\[
\begin{tikzpicture}
\node (A) at (0,2) {$S^n\coprod S^n$};
\node (B) at (5,2) {$D^{n+1}\coprod D^{n+1}$};
\node (C) at (0,0) {$Z_{k,n}$};
\node (D) at (5,0) {$Z'_{k,n}$};

\draw[->] (A) -- (B);
\draw[->] (A) -- (C);
\draw[->] (B) -- (D);
\draw[->] (C) -- (D);
\end{tikzpicture}
\]
at every co-level $m> k$ due to Proposition \ref{pure_colimit_models_colimits}. On the other hand,  no new data is added at co-levels lower than $k$ due to Proposition \ref{pure_colimit_models_colimits}.

\subsection*{Lanari's Cylinders}
We now provide the construction of the cylinders of Lanari. Write $\Sigma$ for the suspension functor of Lanari.  Write $\mathbf{E}_0=D^1$. By induction, define $\mathbf{E}_n$ as the colimit of the following diagram for all $n\geq 1$.

\[
\begin{tikzpicture}[scale=1.1]
    \node (A) at (0,3) {$D^n\amalg_{D^0}D^1$};
    \node (B) at (-2,2) {$D^n$};
    \node (C) at (0,1) {$\Sigma\mathbf{E}_{n-1}$};
    \node (D) at (-2,0) {$D^n$};
    \node (E) at (0,-1) {$D^1\amalg_{D^0}D^n$};

    \draw[->] (B) -- node[above left] {$c^l_{1,n-1}$} (A);
    \draw[->] (B) -- node[above right] {$\Sigma(\iota_0)$} (C);

    \draw[->] (D) -- node[above left] {$\Sigma(\iota_1)$} (C);
    \draw[->] (D) -- node[below left] {$c^r_{1,n-1}$} (E);
\end{tikzpicture}
\]
In this diagram, \[
c^l_{1,n-1}:D^n\to D^1\coprod_{D^0}D^n
\]
and
\[
c^r_{1,n-1}:D^n\to D^n\coprod_{D^0}D^1
\]
are the induced whiskering operations from Notation \ref{algeb_whisk_and_comp} and the maps
\[
\iota_0,\iota_1:D^{n-1}\to \mathbf{E}_{n-1}
\]
are the following composites for all $n\geq 1$.
\[
D^{n-1}\xrightarrow{}D^{n-1}\coprod_{D^0}D^1\xrightarrow{}\mathbf{E}_{n-1}
\]
\[
D^{n-1}\xrightarrow{}D^1\coprod_{D^0}D^{n-1}\xrightarrow{}\mathbf{E}_{n-1}
\]
The maps on the right for both composites are the inclusion into the colimit. We now obtain the coglobular structure by repeating the argument given by Lanari.
\[
\mathbf{E}:\mathbf{G}\to \infty\mathbf{Gpd}
\]

\subsection*{The Cylinder Object Re-realized}
We now define the \emph{canonical cylinder object} for the category of co-globular $\infty$-groupoids. Before we begin, we set the following notation for the \emph{algebraic whiskers}.

\begin{notation}[Fixing Algebraic Whiskers]\label{alg_whiskering}
 For all $n\geq 1$ and $i\geq 0$, we induce operations
 \[
 c^l_{n,i}:D^{n+i}\to D^{n+i}\amalg_{D^{n-1}}D^n\qquad\text{and}\qquad c^r_{n,i}: D^{n+i}\to D^n\amalg_{D^{n-1}}D^{n+i},
 \]
 for left and right whiskerings as the image under the Yoneda embedding of the opposite of the algebraic composition and whiskering operations of Notation \ref{algeb_whisk_and_comp}.
\end{notation}

We make the following remark before proceeding.

\begin{remark}
Lanari picked whiskering maps in \cite{Lanari2020} in the special case that $n=1$. We require these more general compositions and whiskering operations for our construction. Moreover, the whiskers we use are algebraically chosen by the algebraic small object argument and we have made zero choices.
We will remark any choice of $(\infty,0)$-coherators and choice of whiskering maps that are chosen in such a way that the source and target maps are respected will satisfy the argument of this final section.
\end{remark}

We now inductively build the cylinder object. Consider the coproduct of two copies of the co-globular object induced by the Yoneda embedding, i.e., $D_\bullet\amalg D_\bullet$, and set 
\[
C^{(0)}=D_\bullet\amalg D_\bullet.
\]

Define $\lambda^{(0)}_k:D^k\to D^k\coprod D^k$ and $\rho^{(0)}_k:D^k\to D^k\coprod D^k$ to be the inclusion into the first and second factor of the coproduct, respectively. The pair $(\lambda^{(0)}_0,\rho^{(0)}_0)$ is parallel and thus consitutes a map $\beta_0:S^0\to C^{(0)}_0$. By Lemma \ref{Yoneda_properties_2}, we induce a map of the following form.
\[
b_0:F_0(S^0)\to D_\bullet \coprod D_\bullet
\]
We now form the pushout
\begin{equation}\label{initial_pushout}
\begin{tikzpicture}
\node (A) at (0,1.5) {$F_0(S^0)$};
\node (B) at (3,1.5) {$C^{(0)}$};
\node (C) at (0,0) {$F_0(D^1)$};
\node (D) at (3,0) {$C^{(1)}$};

\draw[->] (A) to node[above]{$b_0$} (B);
\draw[->] (A) -- (C);
\draw[->] (B) to node[right]{$J_0$} (D);
\draw[->] (C) to node[below]{$a_1$} (D);
\end{tikzpicture}
\end{equation}
where $C^{(1)}=(D_\bullet\amalg D_\bullet)\amalg_{F_0(S^0)}F_0(D^1)$. A quick calculation shows that we have that
\[
C^{(1)}_k =
\begin{cases}
D^1
    & \text{when } k = 0 \\[8pt]
\displaystyle
(D^k \amalg D^k)
\amalg_{S^0\coprod S^0}
(D^1 \amalg D^1)
    & \text{when } k \geq 1
\end{cases}
\]
Let $\overline{a}^{\sigma,k}_1$ and $\overline{a}^{\tau,k}_1$ be the following composites for all $k\geq 1$.
\[
D^1
\xrightarrow{(a_1)_0}
C^{(1)}_0\xrightarrow{\sigma^{k}_*}C^{(1)}_k
\]

\[
D^1
\xrightarrow{(a_1)_0}
C^{(1)}_0\xrightarrow{\tau^{k}_*}C^{(1)}_k
\]
Now define two maps $\lambda^{(1)}_k,\rho^{(1)}_k:D^k\rightrightarrows C^{(1)}_k$ as the following composites.
\[
D^k
\xrightarrow{c^l_{1,k-1}}
D^k\amalg_{D^0}D^1\xrightarrow{((J_0)_1\circ\lambda^{(0)}_k,\overline{a}^{\tau,k}_1)}C^{(1)}_k
\]
\[
D^k
\xrightarrow{c^r_{1,k-1}}
D^1\amalg_{D^0}D^k\xrightarrow{(\overline{a}^{\sigma,k}_1,(J_0)_1\circ\rho^{(0)}_k)}C^{(1)}_k
\]

For $k=1$, we claim that $(\lambda^{(1)}_1,\rho^{(1)}_1)$ is a
parallel pair. We verify this by pre-composing with the source and
target maps $\sigma,\tau:D^0\to D^1$. By the defining source and target properties of the algebraic
whiskering operations, pre-composition of the left whiskering with
$\sigma$ selects the source of its left factor, while pre-composition
with $\tau$ selects the target of its right factor. Thus
\[
\lambda^{(1)}_1\circ\sigma
=
(J_0)_1\circ\lambda^{(0)}_1\circ\sigma
\]
and
\[
\lambda^{(1)}_1\circ\tau
=
\overline{a}^{\tau,1}_1\circ\tau.
\]

Similarly, pre-composition of the right whiskering with $\sigma$
selects the source of its left factor, while pre-composition with
$\tau$ selects the target of its right factor. Hence
\[
\rho^{(1)}_1\circ\sigma
=
\overline{a}^{\sigma,1}_1\circ\sigma
\]
and
\[
\rho^{(1)}_1\circ\tau
=
(J_0)_1\circ\rho^{(0)}_1\circ\tau.
\]

By the definition of $\overline{a}^{\sigma,1}_1$ and
$\overline{a}^{\tau,1}_1$, together with the pushout defining
$C^{(1)}$, the source of the newly attached $1$-cell is identified
with the source of the first copy of $D^1$, and its target is
identified with the target of the second copy of $D^1$. Therefore
\[
\overline{a}^{\sigma,1}_1\circ\sigma
=
(J_0)_1\circ\lambda^{(0)}_1\circ\sigma
\]
and
\[
\overline{a}^{\tau,1}_1\circ\tau
=
(J_0)_1\circ\rho^{(0)}_1\circ\tau.
\]

It follows that
\[
\lambda^{(1)}_1\circ\sigma
=
\rho^{(1)}_1\circ\sigma
\]
and
\[
\lambda^{(1)}_1\circ\tau
=
\rho^{(1)}_1\circ\tau.
\]

Therefore the pair $(\lambda^{(1)}_1,\rho^{(1)}_1)$ is parallel and constitutes a map $\beta_1:S^1\to C^{(1)}_1$. By Lemma \ref{Yoneda_properties_2}, we induce a map of the following form.
\[
b_1:F_1(S^1)\to C^{(1)}
\]

Before we proceed, we take a break to unpack the data for the reader. Before we pushout along the next projective generating cofibration,  the first three entries of $C^{(1)}$ look like the following.

\[
\begin{tikzpicture}[
    scale=1.2,
    line cap=round,
    line join=round,
    >=stealth
]


\begin{scope}[xshift=0cm]

    \node at (0,2.5) {$C^{(1)}_0$};

    \fill (-0.6,1.2) circle (2.5pt);
    \fill ( 0.6,1.2) circle (2.5pt);

    \draw[->] (-0.55,1.2) -- (0.55,1.2);

\end{scope}


\begin{scope}[xshift=4cm]

    \node at (0.6,2.5) {$C^{(1)}_1$};

    \draw (0,-0.2) -- (0,1.4);
    \fill (0,-0.2) circle (2.5pt);
    \fill (0,1.4) circle (2.5pt);

    \draw (1.2,-0.2) -- (1.2,1.4);
    \fill (1.2,-0.2) circle (2.5pt);
    \fill (1.2,1.4) circle (2.5pt);

    \draw[->] (0,1.4) -- (1.2,1.4);
    \draw[->] (0,-0.2) -- (1.2,-0.2);

\end{scope}


\begin{scope}[xshift=9cm]

    \node at (0.9,2.5) {$C^{(1)}_2$};

    \draw (0,0.6)
        ellipse [x radius=0.65, y radius=0.9];

    \draw (1.8,0.6)
        ellipse [x radius=0.65, y radius=0.9];

    \draw[->] (0,1.5) -- (1.8,1.5);
    \draw[->] (0,-0.3) -- (1.8,-0.3);

    \node at (0,0.65) {$\Rightarrow$};

    \node at (1.81,0.65) {$\Rightarrow$};

\end{scope}

\end{tikzpicture}
\]

The maps $\lambda^{(1)}_k$ capture the composite of the $k$-cell on the right and the $1$-cell on the bottom for all $k\geq 0$. On the other hand, $\rho^{(1)}_k$ captures the composite of the $1$-cell on the top and the $k$-cell on the right for all $k\geq 0$. Another thing to remark is that the way we build our cylinder object, $C^{(k)}_0=C^{(1)}_0$ for all $k\geq 1$.

Now form the pushout
\[
\begin{tikzpicture}
\node (A) at (0,1.5) {$F_1(S^1)$};
\node (B) at (3,1.5) {$C^{(1)}$};
\node (C) at (0,0) {$F_1(D^2)$};
\node (D) at (3,0) {$C^{(2)}$};

\draw[->] (A) to node[above]{$b_1$} (B);
\draw[->] (A) -- (C);
\draw[->] (B) to node[right]{$J_1$} (D);
\draw[->] (C) to node[below]{$a_2$} (D);
\end{tikzpicture}
\]

We have that
\[
C^{(2)}_k=
\begin{cases}
C^{(1)}_0, & k=0,\\[2pt]
C^{(1)}_1\amalg_{S^1}D^2, & k=1,\\[2pt]
C^{(1)}_k \amalg_{S^1\amalg S^1}(D^2\amalg D^2), & k\geq2.
\end{cases}
\]
and the first three entries of $C^{(2)}$ look like the following.
\[
\begin{tikzpicture}[
    scale=1.2,
    line cap=round,
    line join=round,
    >=stealth
]


\begin{scope}[xshift=0cm]

    \node at (0,2.5) {$C^{(2)}_0$};

    \fill (-0.6,1.2) circle (2.5pt);
    \fill (0.6,1.2) circle (2.5pt);

    \draw[->] (-0.55,1.2) -- (0.55,1.2);

\end{scope}


\begin{scope}[xshift=4cm]

    \node at (0.6,2.5) {$C^{(2)}_1$};

    \draw (0,0) rectangle (1.2,1.6);

    \draw[->] (0,0) -- (1.2,0);
    \draw[->] (0,1.6) -- (1.2,1.6);

    \node at (0.6,0.8) {$\Rightarrow$};

    \fill (0,0) circle (2.5pt);
    \fill (0,1.6) circle (2.5pt);
    \fill (1.2,0) circle (2.5pt);
    \fill (1.2,1.6) circle (2.5pt);

\end{scope}


\begin{scope}[xshift=9cm]

    \node at (0.9,2.5) {$C^{(2)}_2$};

    \draw (0,0.3) -- (1.8,0.3);
    \draw (0,2.1) -- (1.8,2.1);

    \draw (0,1.2)
        ellipse [x radius=0.45, y radius=0.9];

    \draw (1.8,1.2)
        ellipse [x radius=0.45, y radius=0.9];

    \draw[dashed]
        (0,0.3) arc (270:90:0.45 and 0.9);

    \draw[dashed]
        (1.8,0.3) arc (270:90:0.45 and 0.9);

    \node at (1,1.5) {$\Rightarrow$};

    \node at (0.8,0.8) {$\Rightarrow$};

    \node at (1,1.5) {$\Rightarrow$};

    \node at (0,1.125) {$\Rightarrow$};

    \node at (1.81,1.125) {$\Rightarrow$};

\end{scope}
\end{tikzpicture}
\]
We now remark that the way we build our cylinder object, we will be forced to have that $C^{(k)}_2=C^{(1)}_2$ for all $k\geq 2$. Moreover, we have our choice of whiskerings from Notation \ref{alg_whiskering}, we have maps that capture the composition of the left $2$-cell along the top $1$-cell and the $1$-cylinder out in front. Therefore we can form the composite to form the target of the $3$-cell that will be added along when we pushout along the next projective generator. Similarly, we have such maps that allow us to form the source of a $3$-cell. Moreover, when we pushout along the next projective generator to form $C^{(3)}$, we will have that $C^{(k)}_2=C^{(3)}_2$ for all $k\geq 3$.

Therefore the way this inductively is going to work is that we will add a single $(n+1)$-cell to $C^{(n)}_n$ when we pushout along the appropriate projective generator to form $C^{(n+1)}_n$. At this point, we will have that $C^{(k)}_n=C^{(n+1)}_n$ for all $k\geq n+1$. Moreover, we add two copies of that single $(n+1)$-cell to $C^{(n)}_k$ to form $C^{(n+1)}_k$ for all $k>n$. The $(n+1)$-cell added to form $C^{(n+1)}_n$ will have source and target completely determined by the maps added by pushing out along the previous projective generators and the data of $\lambda^{(n)}_{n+1}$ and $\rho^{(n)}_{n+1}$. Finally, we will have a formula for the entire co-globular object at every step.

Now suppose that, for some $n\geq 1$, the constructions
$C^{(m)}$, $\lambda^{(m)}_k$, and $\rho^{(m)}_k$ have been made for
all $m<n$ and all $k\geq m$, with $\lambda^{(m)}_m$ and
$\rho^{(m)}_m$ parallel. Additionally, we require that the following formula holds. 
\[
C^{(m)}_t=
\begin{cases}
C^{(t+1)}_t, & t<m-1,\\[2pt]
C^{(m-1)}_{m-1}\amalg_{S^{m-1}}D^{m}, & t=m-1,\\[2pt]
C^{(m-1)}_t\amalg_{S^{m-1}\amalg S^{m-1}}
(D^{m}\amalg D^{m}), & t\geq m
\end{cases}
\]

By the inductive hypothesis, $\lambda^{(n-1)}_{n-1}$ and $\rho^{(n-1)}_{n-1}$ are parallel.
Therefore they determine a map
\[
\beta_{n-1}:S^{n-1}\to C^{(n-1)}_{n-1}.
\]
By Lemma \ref{Yoneda_properties_2}, this induces a map
\[
b_{n-1}:F_{n-1}(S^{n-1})\to C^{(n-1)}.
\]

We form the pushout
\begin{equation}\label{inductive_pushout}
\begin{tikzpicture}
\node (A) at (0,1.5) {$F_{n-1}(S^{n-1})$};
\node (B) at (3,1.5) {$C^{(n-1)}$};
\node (C) at (0,0) {$F_{n-1}(D^n)$};
\node (D) at (3,0) {$C^{(n)}$};

\draw[->] (A) to node[above]{$b_{n-1}$} (B);
\draw[->] (A) -- (C);
\draw[->] (B) to node[right]{$J_{n-1}$} (D);
\draw[->] (C) to node[below]{$a_n$} (D);
\end{tikzpicture}
\end{equation}
so that
\[
C^{(n)}
=
C^{(n-1)}
\amalg_{F_{n-1}(S^{n-1})}
F_{n-1}(D^n).
\]

For every $k\geq n$, let
$\overline{a}^{\sigma,k}_n$ and $\overline{a}^{\tau,k}_n$ be the
composites
\[
D^n
\xrightarrow{(a_n)_{n-1}}
C^{(n)}_{n-1}
\xrightarrow{\sigma^{k-n+1}_*}
C^{(n)}_k
\]
and
\[
D^n
\xrightarrow{(a_n)_{n-1}}
C^{(n)}_{n-1}
\xrightarrow{\tau^{k-n+1}_*}
C^{(n)}_k,
\]
respectively.

We now define
$\lambda^{(n)}_k,\rho^{(n)}_k:D^k\rightrightarrows C^{(n)}_k$ for every
$k\geq n$. Using the maps constructed at the previous stage, define
\[
\lambda^{(n)}_k
=
\left(
(J_{n-1})_k\circ\lambda^{(n-1)}_k,
\overline{a}^{\tau,k}_n
\right)
\circ
c^l_{n,k-n}
\]
and
\[
\rho^{(n)}_k
=
\left(
\overline{a}^{\sigma,k}_n,
(J_{n-1})_k\circ\rho^{(n-1)}_k
\right)
\circ
c^r_{n,k-n}.
\]
Explicitly,
\[
D^k
\xrightarrow{c^l_{n,k-n}}
D^k\amalg_{D^{n-1}}D^n
\xrightarrow{
\left(
(J_{n-1})_k\circ\lambda^{(n-1)}_k,
\overline{a}^{\tau,k}_n
\right)}
C^{(n)}_k
\]
defines $\lambda^{(n)}_k$, while
\[
D^k
\xrightarrow{c^r_{n,k-n}}
D^n\amalg_{D^{n-1}}D^k
\xrightarrow{
\left(
\overline{a}^{\sigma,k}_n,
(J_{n-1})_k\circ\rho^{(n-1)}_k
\right)}
C^{(n)}_k
\]
defines $\rho^{(n)}_k$.

It remains to verify that $\lambda^{(n)}_n$ and $\rho^{(n)}_n$
are parallel. Pre-composing with the source map
$\sigma:D^{n-1}\to D^n$ and using the defining source compatibility
of the algebraic left and right whiskering operations gives
\[
\lambda^{(n)}_n\circ\sigma
=
(J_{n-1})_n\circ\lambda^{(n-1)}_n\circ\sigma
\]
and
\[
\rho^{(n)}_n\circ\sigma
=
\overline{a}^{\sigma,n}_n\circ\sigma.
\]
The pushout defining $C^{(n)}$ identifies the source of the newly
attached $n$-cell with the source of $\lambda^{(n-1)}_n$ which is identified with $\rho^{(n-1)}_{n-1}$. Hence
\[
\overline{a}^{\sigma,n}_n\circ\sigma
=
(J_{n-1})_n\circ\lambda^{(n-1)}_n\circ\sigma.
\]
Therefore
\[
\lambda^{(n)}_n\circ\sigma
=
\rho^{(n)}_n\circ\sigma.
\]

Similarly, pre-composing with the target map
$\tau:D^{n-1}\to D^n$ gives
\[
\lambda^{(n)}_n\circ\tau
=
\overline{a}^{\tau,n}_n\circ\tau
\]
and
\[
\rho^{(n)}_n\circ\tau
=
(J_{n-1})_n\circ\rho^{(n-1)}_n\circ\tau.
\]
The pushout identifies the target of the newly attached $n$-cell with the target of
$\rho^{(n-1)}_n$ which is identified with $\rho^{(n-1)}_{n-1}$, so
\[
\overline{a}^{\tau,n}_n\circ\tau
=
(J_{n-1})_n\circ\rho^{(n-1)}_n\circ\tau.
\]
Thus
\[
\lambda^{(n)}_n\circ\tau
=
\rho^{(n)}_n\circ\tau.
\]

Therefore $\lambda^{(n)}_n$ and $\rho^{(n)}_n$ are parallel, and therefore
determines a map
\[
\beta_n:S^n\to C^{(n)}_n.
\]
By Lemma \ref{Yoneda_properties_2}, this induces
\[
b_n:F_n(S^n)\to C^{(n)}.
\]
Finally, we have that the following calculation holds.
\begin{equation}\label{formula_needed}
C^{(n)}_t=
\begin{cases}
C^{(t+1)}_t, & t<n-1,\\[2pt]
C^{(n-1)}_{n-1}\amalg_{S^{n-1}}D^{n}, & t=n-1,\\[2pt]
C^{(n-1)}_t\amalg_{S^{n-1}\amalg S^{n-1}}
(D^{n}\amalg D^{n}), & t\geq n
\end{cases}
\end{equation}

This completes the inductive step. By induction, we obtain a sequence
\[
C^{(0)}
\to
C^{(1)}
\to
C^{(2)}
\to
C^{(3)}
\to\cdots
\]
of co-globular infinity groupoids whose colimit we call the \emph{canonical cylinder object} and denote by $\mathbf{Cyl}_\bullet$. By induction and construction, we have that
\[
\mathbf{Cyl}_n=C^{(n+1)}_n
\]
for all $n\geq 0$. Since $C^{(0)}=D_\bullet\coprod D_\bullet$, we obtain an induced map from the colimit.
\[
\mathbf{coev}:D_\bullet\coprod D_\bullet\to\mathbf{Cyl}_\bullet
\]
The coproduct comes equipped with built-in structure maps 
\[
\psi_0,\psi_1:D_\bullet\rightrightarrows D_\bullet\coprod D_\bullet
\]
which pre-composes with $\mathbf{coev}$ to obtain the following structure maps.
\[
\mathbf{coev}_0,\mathbf{coev}_1:D_\bullet\rightrightarrows\mathbf{Cyl}_\bullet
\]

\subsection*{Unraveling the Cylinder Object}
We now need to unravel the data of the cylinder object we built and maps out of them. To do so, we give the following definitions to make the work easier to follow. We implement notation that copies the notation of Lanari in \cite{Lanari2020}.
\begin{definition}
The \emph{$n$-cylinder representative} is the infinity groupoid 
\[
\mathbf{Cyl}_n=C^{(n+1)}_n
\]
for all $n\geq 0$. The \emph{boundary of the $n$-cylinder representative}, denoted by $\partial(\mathbf{Cyl}_n)$, is defined to be $C^{(n)}_{n}$ for all $n\geq 0$.
\end{definition}

\begin{definition}
Given an infinity groupoid $X$, an $n$-cylinder in $X$ is a map 
\[
C:\mathbf{Cyl}_n\to X
\]
for all $n\geq 0$. The boundary of $C$, denoted by $\partial C$, is given by the inclusion 
\[
f\circ (J_n)_n.
\]
Given an $n$-cylinder $C$ of $X$, we write the following notation.
\[
C\circ \psi_0=C\circ \mathbf{coev}_0\qquad C\circ \psi_1=C\circ \mathbf{coev}_1\qquad \mathbf{Cyl}^s_n=C^{n-1}_n
\]
\[
s(C)=C\circ \mathbf{Cyl}(s)\qquad t(C)=C\circ \mathbf{Cyl}(t)
\]
Given an $n$-cylinder $C$ with $s(C)=A$ and $t(C)=B$, we will write $C:A\rightsquigarrow B$ to denote the cylinder.
\end{definition}

\begin{proposition}
The canonical map
\[
\iota:
D_\bullet\amalg D_\bullet\to\mathbf{Cyl}_\bullet
\]
is a point-wise cofibration.
\end{proposition}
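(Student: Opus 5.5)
The plan is to show that at each co-level $k$, the component $\iota_k:D^k\amalg D^k\to\mathbf{Cyl}_k$ is a relative cell complex built from the boundary inclusions $j:S^{r-1}\to D^r$ of $\infty\mathbf{Gpd}$. Such complexes are cofibrations, since the cofibrations are the left class generated by these maps and so are closed under pushout, transfinite composition and retract.

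First, $\iota$ is the transfinite composite of the sequence $C^{(0)}\to C^{(1)}\to C^{(2)}\to\cdots$. By Proposition \ref{pure_colimit_models_colimits}, colimits in $\mathcal M$ and in $\infty\mathbf{Gpd}$ are computed point-wise, so
\[
\iota_k=\operatorname{colim}_n\,(J_{n-1}\circ\cdots\circ J_0)_k .
\]
It therefore suffices to show that each $(J_{n-1})_k:C^{(n-1)}_k\to C^{(n)}_k$ is a cofibration of $\infty$-groupoids. Cofibrations are stable under sequential composition, so the colimit is then a cofibration.

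Second, evaluate the pushout \eqref{inductive_pushout} at co-level $k$ using the description of $F_{n-1}$ and point-wise colimits. There are three cases, matching formula \eqref{formula_needed}.
\begin{itemize}
\item If $k<n-1$, then $F_{n-1}(S^{n-1})_k=F_{n-1}(D^n)_k=\emptyset$, so $(J_{n-1})_k$ is an isomorphism.
\item If $k=n-1$, then $(J_{n-1})_k$ is the pushout of $j_n:S^{n-1}\to D^n$ along $\beta_{n-1}$.
\item If $k\geq n$, then $(J_{n-1})_k$ is the pushout of $j_n\amalg j_n:S^{n-1}\amalg S^{n-1}\to D^n\amalg D^n$, which is itself a coproduct of generating cofibrations.
\end{itemize}
In every case $(J_{n-1})_k$ is a pushout of a (coproduct of) generating cofibrations, hence a cofibration.

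The main obstacle is making precise the case analysis of the levelwise pushout. One needs to check that the pushout in $\mathcal M$ of \eqref{inductive_pushout} really restricts at each $k$ to the displayed pushout in $\infty\mathbf{Gpd}$. This amounts to checking that $\mathcal M$, as functors into a category of models of a pure colimit sketch, inherits point-wise colimits from $\infty\mathbf{Gpd}$.

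I would handle this by observing that $\mathcal M=[\mathbb G,\infty\mathbf{Gpd}]$ is itself a category of models of the sketch $\mathbb G\times AC$. The same argument as in Proposition \ref{pure_colimit_models_colimits} then gives point-wise colimits in the $\mathbb G$ direction.

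There is one further point to check: that "cofibration" in $\infty\mathbf{Gpd}$ means the left class generated by $\{S^{r-1}\to D^r\}$. If a model-structure notion is intended instead, I would invoke that these boundary inclusions are its generating cofibrations.
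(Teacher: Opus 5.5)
Your proposal is correct and follows the paper's proof. Both induct along $C^{(0)}\to C^{(1)}\to\cdots$, use the levelwise formula to see that each $(J_{n-1})_k$ is an isomorphism or a pushout of $j_n$ or $j_n\amalg j_n$, and pass to the sequential colimit; your version just makes the closure properties explicit.

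One correction to a side remark: colimits in $\infty\mathbf{Gpd}$ are not computed pointwise over $AC$, because its models are cut out by globular-product \emph{limit} conditions, so the $\mathbb{G}\times AC$ sketch argument does not go through. You only need levelwise colimits in $\mathcal{M}=[\mathbb{G},\infty\mathbf{Gpd}]$, and these hold simply because $\infty\mathbf{Gpd}$ is cocomplete (equivalently, by Proposition \ref{pure_colimit_models_colimits} with $\mathbb{G}$ carrying no distinguished cocones).
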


\begin{proof}
We prove that the canonical map
\[
D_\bullet\amalg D_\bullet\to C^{(n)}
\]
is a point-wise cofibration by induction on $n$. For $n=0$, this map is the identity. Now suppose that
\[
D_\bullet\amalg D_\bullet\to C^{(n)}
\]
is a point-wise cofibration. By construction, we have a pushout
\[
\begin{tikzpicture}
\node (A) at (0,2) {$F_n(S^n)$};
\node (B) at (4,2) {$C^{(n)}$};
\node (C) at (0,0) {$F_n(D^{n+1})$};
\node (D) at (4,0) {$C^{(n+1)}$};

\draw[->] (A) -- (B);
\draw[->] (A) -- (C);
\draw[->] (B) -- (D);
\draw[->] (C) -- (D);
\end{tikzpicture}
\]
and therefore 

\[
C^{(n+1)}_k=
\begin{cases}
C^{(k+1)}_k, & k<n,\\[2pt]
C^{(n)}_{n}\amalg_{S^n}D^{n+1}, & k=n,\\[2pt]
C^{(n)}_k\amalg_{S^n\amalg S^n}
(D^{n+1}\amalg D^{n+1}), & k\geq n+1,
\end{cases}
\]
so that 
\[
D_\bullet\amalg D_\bullet\to C^{(n+1)}
\]
is a point-wise cofibration. By induction and upon taking colimits, we obtain that 
\[
\iota:
D_\bullet\amalg D_\bullet\to\mathbf{Cyl}_\bullet
\]
is a point-wise cofibration.
\end{proof}

The following theorem now holds by the formulas we have obtained in this paper and certain pushouts that Lanari obtains in \cite{Lanari2020}.

\begin{theorem}\label{iso_to_Lan}
There is an isomorphism of of co-globular infinity groupoids $\phi:\mathbf{Cyl}_\bullet\to \mathbf{E}$, where $\mathbf{E}$ is the cylinder construction given by Lanari.
\end{theorem}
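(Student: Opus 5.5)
The plan is to build $\phi$ levelwise and compare the two constructions through cell-attachment presentations. Since $\mathbf{Cyl}_n=C^{(n+1)}_n$, formula (\ref{formula_needed}) iterated down to $C^{(0)}$ presents $\mathbf{Cyl}_n$ as an iterated pushout of $\infty$-groupoids. It starts from $D^n\amalg D^n$. It then attaches, for each $0\le m\le n$, cells of dimension $m+1$ along $S^m$: two copies while $m<n$, and a single copy when $m=n$. The attaching maps are determined by the whiskered composites $\lambda^{(m)}$ and $\rho^{(m)}$. Thus $\mathbf{Cyl}_n$ is the free $\infty$-groupoid generated by a globular-type presentation. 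A map out of it is the same as a pair of $n$-cells together with a coherent family of "connecting" cells whose boundaries are the prescribed whiskerings.

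The first step is to obtain the analogous description of Lanari's $\mathbf{E}_n$, by induction on $n$. We use that $\Sigma$ is a left adjoint-type functor preserving the relevant pushouts; this is the system of pushouts Lanari proves in \cite{Lanari2020}. Concretely, $\Sigma\mathbf{E}_{n-1}$ is obtained from $\Sigma(D^{n-1}\amalg D^{n-1})=D^n\amalg_{S^0}D^n$ by attaching suspended cells. The zigzag colimit then glues on the two $1$-cells $D^1$ via the whiskerings $c^l_{1,n-1}$ and $c^r_{1,n-1}$. Unwinding this, $\mathbf{E}_n$ is an iterated pushout of the same shape as above: it starts from $D^n\amalg D^n$ and attaches exactly the same number of cells in each dimension $m+1$, along spheres $S^m$.

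The second step is to define $\phi_n:\mathbf{Cyl}_n\to\mathbf{E}_n$ inductively, stage by stage along the chain $C^{(0)}_n\to C^{(1)}_n\to\cdots\to C^{(n+1)}_n$. On $C^{(0)}_n=D^n\amalg D^n$ we send the two summands to the two copies of $D^n$ in $\mathbf{E}_n$, namely the images of $\iota_0$ and $\iota_1$. At stage $m$, by the universal property of the pushout (\ref{inductive_pushout}) evaluated pointwise (Proposition \ref{pure_colimit_models_colimits}, Lemma \ref{disk_sphere_projective_calculation}), we must send each newly attached $(m+1)$-cell to the corresponding generating cell of $\mathbf{E}_n$. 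We must also check that the attaching sphere $\beta_m=(\lambda^{(m)}_m,\rho^{(m)}_m)$, after being pushed to level $n$ by $\sigma_*$ and $\tau_*$, is carried to the boundary of that cell.

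The inverse $\mathbf{E}_n\to\mathbf{Cyl}_n$ is built by the same stagewise universal-property argument on Lanari's presentation. The two composites are the identity because each fixes all generating cells, and both objects are freely generated by these cells via iterated pushouts. Compatibility with the co-globular maps $s,t$ is then checked on generators: both sides send $\mathbf{Cyl}(s)$ and $\mathbf{Cyl}(t)$ to the inclusion of the corresponding lower-dimensional cylinder. This yields a natural isomorphism $\phi:\mathbf{Cyl}_\bullet\to\mathbf{E}$.

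The main obstacle is the boundary-matching check in the second step. We must show that Lanari's attaching maps, which are suspended whiskerings $\Sigma(c_{1,\cdot})$ combined with $c^l_{1,n-1}$ and $c^r_{1,n-1}$, agree with ours, which are $c^l_{m,k-m}$ and $c^r_{m,k-m}$. In general these are different operations in $AC$, so they will not agree on the nose. My first attempt is to show that $\Sigma$ carries the algebraically chosen lifts of $AC$ to the algebraically chosen lifts for the shifted admissible pairs; this would follow from the functoriality of Garner's small object argument under the suspension of $\Theta_0$. If only agreement up to a specified cell holds, I would instead absorb the discrepancy by modifying the chosen generating cells. That modification is legitimate because attaching a cell along either of two boundaries related by a coherence filler yields isomorphic pushouts; this uses contractibility of $AC$ and the freeness of the generators.
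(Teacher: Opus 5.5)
Your route is the paper's. Its proof is only an induction matching the presentation (\ref{formula_needed}) of $\mathbf{Cyl}_n$ with Lanari's pushout diagrams for $\mathbf{E}_n$, and your first two steps spell this out. You also correctly isolate what that one-line proof leaves implicit: the attaching spheres must agree as operations, not just in shape.

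Write $A,B$ for the end $2$-cells, $C_a,C_b$ for the side $1$-cells and $C_f,C_g$ for the side $2$-cells. Already for $n=2$, the top $3$-cell of $\mathbf{Cyl}_2$ is attached along $c_{2,0}(c^l_{1,1}(A,C_b),C_g)$ and $c_{2,0}(C_f,c^r_{1,1}(C_a,B))$. In $\mathbf{E}_2$ the same expressions occur with $\Sigma(c_{1,0})$ in place of $c_{2,0}$. In general you need $\Sigma^{j}(c^l_{1,i})=c^l_{j+1,i}$ and $\Sigma^{j}(c^r_{1,i})=c^r_{j+1,i}$. Given these, the nesting of whiskerings in $\lambda^{(n)}_n,\rho^{(n)}_n$ matches the nesting produced by iterating $\Sigma$.

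As written, this step is a genuine gap, because your fallback fails. Suppose a cell $\theta$ attached along $(u_1,v_1)$ is traded for one attached along $(u_2,v_2)$ via fillers $\alpha\colon u_1\to u_2$ and $\beta\colon v_1\to v_2$. The comparison maps send $\theta\mapsto\beta\cdot\theta\cdot\alpha^{-1}$ and back. The round trip $\beta^{-1}\cdot(\beta\cdot\theta\cdot\alpha^{-1})\cdot\alpha$ is not $\theta$ in a free $AC$-algebra. You get an equivalence relative to $X$, not an isomorphism, and it is exactly your claim that the composites fix all generators that breaks. Contractibility of $AC$ produces cells, never equations between distinct operations.

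So everything rests on your first option, which is not a formality. Lanari's suspension on a general coherator is built by choosing lifts for suspended admissible pairs, so the needed identities hold only for a suitable choice. For the same reason, the paper's remark that any source/target-respecting choice of whiskers works really needs this compatibility as an extra hypothesis.

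For $AC$ you can secure it, but the mechanism is freeness rather than a vague functoriality.
\begin{itemize}
\item By Garner, $R_I$-algebras are exactly objects equipped with a lifting function against $I$, so $AC$ is the free globular theory with chosen lifts.
\item Let $\Sigma^{\natural}AC$ have as morphisms $\vec p\to\vec q$ those morphisms $\Sigma\vec p\to\Sigma\vec q$ of $AC$ that preserve the two extremal $0$-cells. It is a globular theory and receives $\Theta_0^{\mathbf{op}}$ via suspension.
\item It inherits a lifting function from $AC$, since a suspended admissible pair is admissible in $AC$ and its chosen lift is again bipointed.
\item Hence there is a unique lift-preserving morphism $AC\to\Sigma^{\natural}AC$. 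Induction on $i$ then gives $\Sigma c_{m,0}=c_{m+1,0}$, $\Sigma c^l_{m,i}=c^l_{m+1,i}$ and $\Sigma c^r_{m,i}=c^r_{m+1,i}$.
\end{itemize}
You must then build $\mathbf{E}$ with the suspension induced by this morphism, or prove that Lanari's construction applied to $AC$ yields it. Once that is fixed, your stagewise universal-property argument sends generators to generators, and your check on generators for the co-globular maps $s,t$ goes through.
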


\begin{proof}
The proof is completed with induction by utilizing Calculation \ref{formula_needed} together with Diagram (9) and Diagram (10) of \cite{Lanari2020}. 
\end{proof}

Therefore, we have provided a different construction for the cylinder object. Moreover, the construction provided here is isomorphic to the one Lanari gave.

\begin{references*}
\bibitem[Ad\'{a}mek and Rosick\'{y}, 1994]{AdRo}
J.~Ad\'{a}mek and J.~Rosick\'{y},
\emph{Locally Presentable and Accessible Categories}.
Cambridge University Press, Cambridge, 1994.

\bibitem[Ara, 2012]{Ara2013Theta}
D.~Ara,
\emph{The groupoidal analogue $\Theta_e$ to Joyal's category $\Theta$
is a test category}.
Applied Categorical Structures,
20(6):603--649, 2012.
doi: \url{https://doi.org/10.1007/s10485-011-9262-5}.
Available at \url{https://arxiv.org/abs/1012.4319}.

\bibitem[Ara, 2013]{Dim1}
D.~Ara,
\emph{On the homotopy theory of Grothendieck $\infty$-groupoids}.
Journal of Pure and Applied Algebra,
217(7):1237--1278, 2013.
doi: \url{https://doi.org/10.1016/j.jpaa.2012.10.016}.
Available at \url{https://www.normalesup.org/~ara/files/homgr.pdf}.

\bibitem[Bourke and Garner, 2013]{BoGa}
J.~Bourke and R.~Garner,
\emph{On semiflexible, flexible and pie algebras},
Journal of Pure and Applied Algebra,
217(2) (2013), 293--321.

\bibitem[Bourke, 2016]{Bo2}
J.~Bourke,
\emph{Note on the construction of globular weak omega-groupoids from
types, topological spaces, $\ldots$}.
Cahiers Topologie G\'eom. Diff\'erentielle Cat\'eg.
\textbf{57} (2016), no.~4, 281--294.

\bibitem[Bourke and Garner, 2016]{BourkeGarner2016}
J.~Bourke and R.~Garner,
\emph{Algebraic weak factorisation systems I: accessible AWFS}.
Journal of Pure and Applied Algebra
\textbf{220} (2016), 108--147.
Available at \url{https://arxiv.org/abs/1412.6559}.

\bibitem[Garner, 2009]{Garner2009}
R.~Garner,
\emph{Understanding the Small Object Argument}.
Appl. Categ. Structures
\textbf{17} (2009), no.~3, 247--285.

\bibitem[Grothendieck, 2022]{Gr1}
A.~Grothendieck,
\emph{Pursuing Stacks (\`a la poursuite des champs). Vol.~I}.
Documents Math\'ematiques~20.
Soci\'et\'e Math\'ematique de France, Paris, 2022.

\bibitem[Henry, 2016]{Henry2016}
S.~Henry,
\emph{Algebraic Models of Homotopy Types and the Homotopy Hypothesis}.
Available at arXiv:1609.04622 [math.AT], 2016.

\bibitem[Henry and Lanari, 2023]{HenLan}
S.~Henry and E.~Lanari,
\emph{On the homotopy hypothesis for 3-groupoids}.
Theory Appl. Categ.
\textbf{39} (2023), Paper No.~26, 735--768.

\bibitem[Lanari, 2018]{Lanari2018}
E.~Lanari,
\emph{A semi-model structure for Grothendieck weak 3-groupoids}.
arXiv:1809.07923, 2018.
Available at \url{https://arxiv.org/abs/1809.07923}.

\bibitem[Lanari, 2020]{Lanari2020}
E.~Lanari,
\emph{Towards a globular path object for weak $\infty$-groupoids}.
Journal of Pure and Applied Algebra,
224(2):630--702, 2020.
doi: \url{https://doi.org/10.1016/j.jpaa.2019.06.004}.
Available at \url{https://www.sciencedirect.com/science/article/pii/S0022404919301501}.

\bibitem[Maltsiniotis, 2005]{Geo2}
G.~Maltsiniotis,
\emph{La th\'eorie de l'homotopie de Grothendieck}.
Ast\'erisque \textbf{301} (2005), vi+140 pp.

\bibitem[Maltsiniotis, 2010]{Malt}
G.~Maltsiniotis,
\emph{Grothendieck $\infty$-groupoids, and still another definition of
$\infty$-categories}.
Available at arXiv:1009.2331 [math.CT], 2010.

\bibitem[Quillen, 1967]{Quillen1967}
D.~G. Quillen,
\emph{Homotopical Algebra}.
Lecture Notes in Mathematics, Vol.~43,
Springer-Verlag, 1967.

\bibitem[Rezk, 2001]{Rezk2001}
C.~Rezk,
\emph{A model for the homotopy theory of homotopy theory}.
Trans. Amer. Math. Soc. \textbf{353} (2001), no.~3, 973--1007.
Available at \url{https://arxiv.org/abs/math/9811037}.

\end{references*}
\end{document}